\def\changesHilighted{false} 



\documentclass[final,3p,times]{elsarticle}


\usepackage{amssymb}



\usepackage[dvipsnames]{xcolor}
\usepackage{mathtools}
\usepackage{svg}
\usepackage{amsmath}
\DeclarePairedDelimiter{\abs}{\lvert}{\rvert}
\newcommand{\norm}[1]{\lVert #1 \rVert}

\definecolor{dogwoodrose}{rgb}{0.84, 0.09, 0.41}
\definecolor{darkpastelgreen}{rgb}{0.01, 0.75, 0.24}

\newcommand{\add}[1]{\textcolor{\addcolor}{#1}}
\newcommand{\clyde}[1]{\textbf{\textcolor{RawSienna}{\ [Clyde: #1]}}}
\newcommand{\masaki}[1]{\textbf{\textcolor{dogwoodrose}{\textbf{\ [Masaki: #1]}}}}

\usepackage[normalem]{ulem}

\newcommand{\del}[1]{\textcolor{red}{\sout{#1}}}

\newcommand{\dell}[1]{\textcolor{red}{\sout{#1}}}

\newcommand{\delwhole}[1]{{#1}}

\ifnum\pdfstrcmp{\changesHilighted}{false}=0
\renewcommand{\add}[1]{#1}

\renewcommand{\del}[1]{}
\renewcommand{\dell}[1]{}
\renewcommand{\masaki}[1]{}
\renewcommand{\clyde}[1]{}
\renewcommand{\delwhole}[1]{}
\fi

  \newcommand{\gwidth}{9.1cm}

\newdefinition{definition}{Definition}[section]
\newtheorem{assumption}[definition]{Assumption}
\newtheorem{lemma}[definition]{Lemma}

\newtheorem{theorem}[definition]{Theorem}
\newdefinition{remark}[definition]{Remark}
\newdefinition{example}[definition]{Example}

\newcommand{\Norm}[1]{\left\lVert #1 \right\rVert}
\newcommand{\Abs}[1]{\left| #1 \right|}

\DeclareMathOperator{\Gl}{Gl} 
\DeclareMathOperator{\Sl}{Sl} 
\DeclareMathOperator{\tr}{tr} 
\DeclareMathOperator{\Sp}{Sp} 

\newproof{proof}{Proof}
\newproof{pf}{Proof}

\newcommand{\mywidth}{8.7cm}
\newcommand{\myvecwidth}{8.7cm}

\newcommand{\nnorm}[1]{{\left\vert\kern-0.25ex\left\vert\kern-0.25ex\left\vert #1 \right\vert\kern-0.25ex\right\vert\kern-0.25ex\right\vert}}

\usepackage{algorithm}
\usepackage{algorithmic}

\usepackage{mathtools}

\setcounter{topnumber}{10}
\setcounter{bottomnumber}{10}
\setcounter{totalnumber}{10}     
\setcounter{dbltopnumber}{10}    


\begin{document}

\begin{frontmatter}



\title{Mean Escape Time of Switched Riccati Differential Equations}


\author[OU,TTU]{Masaki Ogura\corref{cor1}}
\corref{cor1}
\ead{m-ogura@ist.osaka-u.ac.jp}
\cortext[cor1]{Corresponding author}
\author[TTU]{Clyde Martin}

\affiliation[OU]{organization={Graduate School of Information Science and Technology, Osaka University},
            addressline={Yamadaoka 1-5}, 
            city={Suita},
            postcode={565-0871}, 
            state={Osaka},
            country={Japan}}

\affiliation[TTU]{organization={Department of Mathematics and Statistics, Texas Tech University},
            addressline={1108 Memorial Circle}, 
            city={Lubbock},
            postcode={79409}, 
            state={TX},
            country={USA}}

\begin{abstract}
Riccati differential equations is the class of first-order and quadratic ordinary differential equations and has various applications in the systems and control theory. In this paper, we analyze a switched Riccati differential equation that is driven by a Poisson-like stochastic signal. We specifically focus on the computation of the mean escape time of the switched Riccati differential equation. The contribution of this paper is twofold. We first show that, under the assumption that the subsystems described as a deterministic Riccati differential equation escape in finite time regardless of its initial state, the mean escape time of the switched Riccati differential equation admits a power series expression. In order to further expand the applicability of this result, we then present an approximative formula for computing the escape time of deterministic Riccati differential equations. We present numerical simulations to illustrate the obtained results. 
\end{abstract}



\begin{keyword}
Riccati differential equations \sep switched systems \sep Grassmannians



\end{keyword}

\end{frontmatter}


\section{Introduction}

\clyde{dear Masaki, I read the aper this morning and I like it. In the introduction o the Riccati equation and the Grassmannian you might add that the finite escape time of the differential equation amounts to leaving the canonical chart on the Grassmannian. You might also point out that by a change of basis every nonlinear riccati equation is equivalent to a linear ODE or to a forced linear ODE. You might wnat to mention in the intro that there is a huge theory of scalar time varying Riccati equations as developed in Watson's "Treatise on the Bessel Functions." I don't think that material has been approached in a modern setting. It is very complicated being equivalent to the theory of special functions..}\masaki{Thanks for your comments. I have updated the first paragraph of the introduction accordingly.}
\add{Riccati differential equations (RDEs)~\cite{Shayman1986} is the class of first-order and quadratic ordinary differential equations and has various applications in}
\del{The applications of Riccati differential equations (RDEs) \cite{Shayman1986} include }dynamic games~\cite{Basar1989},
$H^\infty$ optimal control problems~\cite{Doyle1989}, and singular
perturbation of boundary value problems~\cite{Chang1972}. 
For a recent survey of RDEs, see
\cite{Freiling2002} and the references therein. \add{For the specific case of scalar RDEs, we refer the readers to the classic work by Watson~\cite{Watson1995}.}
\add{Although it is well-known that a Riccati differential equation is locally equivalent to a linear ordinary differential equation by a change of basis~\cite{Shayman1986},} \del{
Being a nonlinear differential equation, }\add{the non-linearlity of} a RDE \add{allows its}
\del{can have the }solution \add{to diverge}\del{that diverges} in a finite time, which is called
the finite-time escape phenomena. Various \add{characterizations and} conditions\del{ under which the
solution of a RDE escapes in a} \add{for the} finite\add{-}time \add{escape phenomena} can
be found in \add{the literature (see, e.g., }\cite{Martin1981,Sasagawa1982,Crouch1987}\add{)}. \add{Among the various results, the most fundamental fact is that the finite-time escape can be characterized by the leave of the induced flow on a manifold called the Grassmannian~\cite{Shayman1986,Doolin1990} from its canonical chart.} \add{As for the qualitative analysis of the finite-time escape phenomena,} 
Getz and Jacobson~\cite{Getz1977} derive a sufficient condition for the solution
to escape in a finite time, and also give an upper bound of the escape
time. On the other hand, Freiling et al.~\cite{Freiling2000} give a condition under which
a finite-time escape phenomena does not occur. 

The primary objective of this paper is investigating the escape time of \emph{switched} RDEs, in which its subsystems are described by RDEs, and its switching dynamics is driven by a Poisson-like stochastic signal. It can be easily confirmed that a switched linear system with Poisson jumps is a Markov jump linear system, which is widely studied in the literature (see, e.g.,~\cite{Zhang2008,Feng1992,Zhang2010,Shi2015}). Although we can find extensive amount of works for studying the Markov jump nonlinear systems~\cite{Wu2017a,Jin2022,Zhang2019b,Shen2016b}, most of them implicitly assume the non-existence of the finite-time escape in each of the subsystems. Therefore, these results available in the literature for Markov jump nonlinear systems are not directly applicable to switched RDEs studied in this paper. 

\masaki{I have chosen to omit the part on switched ERDE (previously Section 5) because of the high iThenticate duplicate score of 50\%. This value is high enough for an Editor-reject. By removing the part coming from CDC2012, I could reduce the score by 10\%, which is a big number. I have also rephrased various parts of the paper to eventually get an acceptable score of 34\%. I'm going to carefully go over the draft once again to make sure that we did not lose consistency due to this big change.}

To fill in the aforementioned gap, in this paper, we first study the computation of the mean escape time of a switched RDE subject to Poisson switching. 
Specifically, under the assumption that its subsystems described by deterministic RDEs escape in finite time regardless of their initial states, we show that the mean escape time of the switched RDE admits a power series expression. In order to further expand the applicability of this result, we then present an approximative formula for the computation of the escape time of deterministic RDEs.

This paper is organized as follows. After preparing necessary mathematical
notation and conventions, in Section~\ref{section:Grass} we briefly review the RDEs and Grassmannians and, then, introduce the switched RDEs subject to Poisson switching. Then, in Section~\ref{section:ExpectedEscapeTime}, we study the
expected value of the escape time of switched RDEs. We then study the approximate computation of the escape time of both deterministic and switched RDEs in Section~\ref{subsec:approx}. 
We finally conclude the paper in Section~\ref{sec:conc}.

\subsection{Notation}

We let the field of real numbers be denoted by $\mathbb{R}$. For a positive  number $x$, we define
$\log^+(x) = \max(\log x, 0)$. For a subset
$S$ on $\mathbb{R}$, we let $\chi_{S}$ denote the characteristic function of
$S$. 
product in~$\mathbb{R}^d$, which yields 
The Euclidean norm of a real vector~$v$ is denoted by $\norm{v}$. We also define the norm
$\norm{v}_\infty = \max_{i=1}^d \abs{v_i}$. 
We denote by $I_n$ the identity matrix of the dimension $n$. The
subscript $n$ will be omitted when it is clear from the context. 
The maximum singular value of a matrix~$M \in \mathbb{R}^{m\times d}$ is denoted
by~$\norm{M}$. We let $\Sp M$ denote the column space of~$M$. When $M$ is square, we let $\tr M$ denote the trace of~$M$. For a subspace~$V$
in~$\mathbb{R}^d$, we define the subspace~$M(V)$ of~$\mathbb{R}^m$ by
  $M(V) = \{Mv: v\in V\}$.
Let $\Gl(d, \mathbb{R})$ denote the multiplicative group of invertible
$d \times d$ real matrices. The subgroup of  $\Gl(d, \mathbb{R})$ consisting of the matrices having
determinant~$1$ is denoted by~$\Sl(d, \mathbb{R})$.

For a measure space~$X$, we let $L^\infty(X)^n$ denote the set of 
$\mathbb{R}^n$\nobreakdash-valued Lebesgue measurable functions~$f$ on~$X$ satisfying
  $\sup_{x \in X} \norm{f(x)}_\infty < \infty$.
The set $L^\infty(X)^n$ becomes a Banach space when equipped with the
norm~$\norm{f} = \sup_{x \in X} \norm{f(x)}_\infty$. 

Let $X$ be metric space with a distance $\rho$. A subset $N\subset X$ is called an
$\epsilon$-net of~$X$ if for every $x\in X$ there exists $y\in N$ such
that $\rho(x, y) < \epsilon$. $X$ is called totally bounded if $X$
admits a finite $\epsilon$-net for every $\epsilon>0$. 

For a Banach
space~$X$, let $\mathcal{L}(X)$ denote the space of continuous linear operators
on~$X$. The identity operator in $\mathcal{L}(X)$ is denoted by $I$. The set~$\mathcal{L}(X)$ becomes a Banach space when equipped with the norm
  $\norm{A} = \sup_{x \neq 0} {\norm{Ax}}/{\norm{x}}$.
The next lemma about the invertibility of the operators in $\mathcal{L}(X)$ is well known.

\begin{lemma} \label{lemma:SmallGainBanach}
  Let $X$ be a Banach space. Take an arbitrary $A \in \mathcal{L}(X)$. Assume $\norm{A} < 1$. 
  Then, the operator $I-A$ is invertible in~$\mathcal{L}(X)$ and, furthermore, the inverse equals $\sum_{k=0}^\infty A^k$.
\end{lemma}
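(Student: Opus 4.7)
The plan is to construct the inverse explicitly as the limit of the partial sums $S_n = \sum_{k=0}^n A^k$ and then verify algebraically that this limit is a two-sided inverse of $I-A$. The argument splits naturally into two steps: first, showing that the series $\sum_{k=0}^\infty A^k$ converges in $\mathcal{L}(X)$; second, showing that its limit $S$ satisfies $(I-A)S = S(I-A) = I$.

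For the convergence step, I would use the submultiplicativity of the operator norm, which gives $\norm{A^k} \leq \norm{A}^k$ for every $k \geq 0$. Since $\norm{A} < 1$, the scalar geometric series $\sum_{k=0}^\infty \norm{A}^k$ converges, so for $m > n$ we have
\begin{equation*}
  \norm{S_m - S_n} \leq \sum_{k=n+1}^m \norm{A}^k,
\end{equation*}
which tends to zero as $n \to \infty$. Thus $\{S_n\}$ is Cauchy in $\mathcal{L}(X)$. Because $X$ is a Banach space, the operator space $\mathcal{L}(X)$ endowed with the operator norm is also complete, and therefore $S_n$ converges in norm to some operator $S \in \mathcal{L}(X)$.

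For the inverse step, I would compute the telescoping products
\begin{equation*}
  (I - A) S_n = S_n (I - A) = I - A^{n+1}.
\end{equation*}
Since $\norm{A^{n+1}} \leq \norm{A}^{n+1} \to 0$, we have $A^{n+1} \to 0$ in $\mathcal{L}(X)$. Passing to the limit in the two identities above, and using that left- and right-multiplication by the fixed operator $I-A$ are continuous on $\mathcal{L}(X)$, yields $(I-A) S = S (I-A) = I$. This shows that $I-A$ is invertible in $\mathcal{L}(X)$ with $(I-A)^{-1} = S = \sum_{k=0}^\infty A^k$.

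There is no real obstacle in this argument; the only subtlety is ensuring that the limit exchange in the second step is justified, which follows immediately from the continuity of composition in a Banach algebra. Everything else is a direct application of the geometric-series majorant together with the completeness of $\mathcal{L}(X)$.
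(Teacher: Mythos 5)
Your proof is correct and complete: the paper itself states this lemma without proof, citing it as well known, and your argument is precisely the canonical Neumann series proof one would supply --- geometric majorant plus completeness of $\mathcal{L}(X)$ for convergence, then the telescoping identities $(I-A)S_n = S_n(I-A) = I - A^{n+1}$ with $A^{n+1}\to 0$ and continuity of composition to pass to the limit. Nothing is missing; in particular you correctly justify both the completeness of $\mathcal{L}(X)$ (inherited from that of $X$) and the limit exchange in the final step.
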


Let us also state a matrix version of Lemma~\ref{lemma:SmallGainBanach}. 

\begin{lemma} \label{lemma:SmallGain}
Let $M$ be a square matrix. 
\begin{enumerate}
\item  If $\norm{M - I} < 1$,  then $M$ is invertible. 

\item  If $\norm M < 1$, then the matrix~$I-M$ is invertible and, moreover, 
$\norm{(I-M)^{-1}} \leq 1/(1-\norm M)$. 
\end{enumerate}
\end{lemma}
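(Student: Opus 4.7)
The plan is to derive both statements as direct specializations of Lemma~\ref{lemma:SmallGainBanach} by taking the Banach space $X$ to be $\mathbb{R}^d$ equipped with the Euclidean norm, so that $M$ and $I$ can be regarded as elements of $\mathcal{L}(X)$ and the operator norm agrees with the maximum singular value used throughout the paper.

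For the first statement, I would set $A = I - M$. The hypothesis $\norm{I - M} < 1$ is exactly $\norm{A} < 1$, so Lemma~\ref{lemma:SmallGainBanach} applies and guarantees that $I - A$ is invertible in $\mathcal{L}(X)$. Since $I - A = I - (I - M) = M$, this yields the invertibility of $M$.

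For the second statement, I would apply Lemma~\ref{lemma:SmallGainBanach} directly with $A = M$. The hypothesis $\norm{M} < 1$ immediately gives that $I - M$ is invertible and that
\begin{equation*}
(I - M)^{-1} = \sum_{k=0}^{\infty} M^k.
\end{equation*}
The norm estimate then follows by the triangle inequality (extended to the series via continuity of the norm) and submultiplicativity of the operator norm:
\begin{equation*}
\norm{(I - M)^{-1}} \leq \sum_{k=0}^{\infty} \norm{M^k} \leq \sum_{k=0}^{\infty} \norm{M}^k = \frac{1}{1 - \norm{M}},
\end{equation*}
where convergence of the geometric series is ensured by $\norm{M} < 1$.

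There is no real obstacle here, as the lemma is essentially a restatement of Lemma~\ref{lemma:SmallGainBanach} in the finite-dimensional matrix setting; the only point to verify carefully is that the matrix norm $\norm{\cdot}$ (the maximum singular value) coincides with the operator norm on $\mathcal{L}(\mathbb{R}^d)$ induced by the Euclidean norm, which is a standard fact and is implicit in the notational conventions introduced earlier in this section.
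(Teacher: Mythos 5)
Your proposal is correct and matches the paper's intent exactly: the paper gives no explicit proof, introducing the lemma only as ``a matrix version of Lemma~\ref{lemma:SmallGainBanach},'' and your specialization to $X = \mathbb{R}^d$ with the substitutions $A = I - M$ for the first claim and $A = M$ (plus the geometric-series norm bound) for the second is precisely the intended derivation.
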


\section{Switched Riccati Differential Equations}
\label{section:Grass}

In this section, we introduce the RDE with Poisson jumps. In Subsection~\ref{subsec:RDE}, we briefly review RDEs with an emphasis on their relationship to Grassmannian manifolds. Then, in Subsection~\ref{subsc:RDEMJ}, we formulate RDEs with Poisson jumps, which is the main subject of this paper. 

\subsection{RDEs and Grassmannians}\label{subsec:RDE}

Let us give a brief review on RDEs and their relationship to Grassmannians~\cite{Shayman1986}. Take an arbitrary matrix~$A\in\mathbb{R}^{d\times d}$. Let $k < d$ be a positive integer. Partition the matrix~$A$ as 
\begin{equation*}
  A =
  \begin{bmatrix}
    A_{11}&A_{12}\\
    A_{21}&A_{22}
  \end{bmatrix}, 
\end{equation*}
for $A_{11} \in \mathbb{R}^{k\times k}$, $A_{12} \in \mathbb{R}^{k \times (d-k)}$, $A_{21}\in \mathbb{R}^{(d-k) \times k}$, and~$A_{22}\in \mathbb{R}^{(d-k) \times (d-k)}$. 
Then, we can define the Riccati differential equation (RDE) associated with the matrix $A$ by 
\begin{equation} \label{eq:RDE}
  \frac{dY}{dt} = A_{21} + A_{22}Y - Y A_{11} - YA_{12}Y. 
\end{equation}
We remark that this RDE is not necessarily symmetric because the matrix $A$ is taken arbitrarily.
Now, we let the solution of this RDE with the initial
condition~$Y(0) = Y_0$ by $Y(\cdot;Y_0)$. Then, the first time $t > 0$ at which the solution~$Y(t;Y_0)$ does not exist is defined as
the {\it escape time}~\cite{Martin1981} of the RDE~\eqref{eq:RDE}.
 When such $t$ does not exist, i.e., if the solution of the RDE~\eqref{eq:RDE} exists for all $t\geq 0$, then we regard the escape time of the RDE as $+\infty$. 

RDEs are closely related to manifolds called
Grassmannians~\cite{Shayman1986,Martin1981}. Specifically, the Grassmannian~$G^k(\mathbb{R}^d)$ consists
of the set of all the $k$\nobreakdash-dimensional subspaces
in~$\mathbb{R}^d$. In the special case of $k=1$, the Grassmannian reduces to the real
projective space and is denoted by~$P(\mathbb{R}^d)$. 
Notice that the group~$\Gl(\mathbb{R}^d)$ acts on the manifold~$G^k(\mathbb{R}^d)$ because an invertible matrix
always maps a $k$\nobreakdash-dimensional subspace to another $k$\nobreakdash-dimensional
subspace.

There exists a fundamental connection between the RDEs and the Grassmannians. Let us define $\psi \colon \mathbb{R}^{(d-k) \times k} \to G^k(\mathbb{R}^d)$ by the equation
\begin{equation*}
  \psi(K) = \Sp\begin{bmatrix} I_k \\ K \end{bmatrix}. 
\end{equation*}
We also define $G_0^k(\mathbb{R}^d) \subset G^k(\mathbb{R}^d)$ as the set of all the 
$k$\nobreakdash-dimensional subspaces in~$\mathbb{R}^d$ complementary
to the subspace~$\Sp\begin{bmatrix}0\\I_{d-k}\end{bmatrix}$. Then, one can see that the mapping~$\psi$
embeds the set~$\mathbb{R}^{(d-k)\times k}$ into the Grassmannian~$G^k(\mathbb{R}^d)$ as the open and
dense subset~$G_0^k(\mathbb{R}^d)$. 
Furthermore, one can show that the equation 
\begin{equation} \label{eq:RDE--ERDE}
  \psi(Y(t;Y_0)) = e^{At}(\psi(Y_0))
\end{equation}
holds true whenever the solution~$Y(t;Y_0)$ of the RDE~\eqref{eq:RDE} exists. This equation implies that RDE~\eqref{eq:RDE} can be regarded as the local
expression of the differential equation on~$G^k(\mathbb{R}^d)$ corresponding to
the flow
\begin{equation} \label{eq:ERDE}
  F(t;F_0) = e^{At}(F_0)
\end{equation}
with respect to the chart~$(G_0^k(\mathbb{R}^d), \psi^{-1})$. Therefore, by the
{\it extended RDE} (ERDE), we mean the differential
equation on~$G^k(\mathbb{R}^d)$ having the flow~\eqref{eq:ERDE}.
For this reason, with the abuse of notation, we write the solution of RDE~\eqref{eq:RDE} as
\begin{equation}\label{eq:notation}
  e^{A t}.Y_0 = Y(t;Y_0).
\end{equation}
It should be noticed that, by the relationship~\eqref{eq:RDE--ERDE}, we can see that RDE~\eqref{eq:RDE} escapes
exactly when the flow $e^{At}(\psi(Y_0))$ leaves the subset~$G_0^k(\mathbb{R}^d)$.

As for the escape time of the RDE~\eqref{eq:RDE}, we can specifically
see that the solution of  \eqref{eq:RDE} is given by $Y(t;Y_0) = V(t)
U(t)^{-1}$ where $U(t) \in \mathbb{R}^{k\times k}$ and $V(t) \in
\mathbb{R}^{(d-k)\times k}$ are given by the partition 
\begin{equation}\label{eq:def:UV}
\begin{bmatrix}
U(t)\\V(t)
\end{bmatrix}
 = e^{At}\begin{bmatrix}
I _n\\ Y_0
\end{bmatrix} 
\end{equation}
provided that $U(t)^{-1}$ exists. Hence, the equation \eqref{eq:RDE} 
escapes precisely at the minimum $t\geq 0$ at which the matrix $U(t)$
becomes singular. 

Let us see an example to fix ideas. 

\begin{example} 
  Let $\omega$ be a positive number. Let us consider the RDE
  \begin{equation} \label{eq:ExampleRDE1}
    \frac{dy}{dt} = \omega (1+y^2). 
  \end{equation}
  We set the initial condition of this RDE as $y(0) = 0$. 
  It is trivial to see that the solution of this RDE is given by $y(t) = \tan(\omega t)$. This implies that the
    RDE~\eqref{eq:ExampleRDE1} escapes at the time $t = \pi/(2\omega)$.
  On the other hand, the ERDE induced by the RDE~\eqref{eq:ExampleRDE1} can be derived as follows. 
  First, to the domain~$\mathbb{R}$ of the RDE~\eqref{eq:ExampleRDE1}, we can 
  correspond the Grassmannian~$G^1(\mathbb{R}^2)$ (i.e., the projective
  space~$P(\mathbb{R}^2)$). Hence, the canonical chart~$\psi$ maps a real number 
  $t$ to the straight line having
  the slope~$t$ and passing through the origin. This particularly implies that the initial state $y(0) = 0$ of the RDE is mapped to the
  $x$\nobreakdash-axis. Furthermore, with this canonical chart, a flow on~$P(\mathbb{R}^2)$
  escapes exactly when it coincides with the $y$\nobreakdash-axis.
  Now, because the RDE~\eqref{eq:ExampleRDE1} is induced by the matrix
  \begin{equation*}
    A =
    \begin{bmatrix}
    0     &-\omega\\
    \omega&0
    \end{bmatrix},
  \end{equation*} 
  the ERDE is nothing but the flow described by 
  \begin{equation*}
    e^{At}.(\psi(0))
    =
    \left\{
    r\begin{bmatrix}
    \cos(\omega t) \\ \sin(\omega t)
    \end{bmatrix}
    :
    r\in \mathbb{R}
    \right\}, 
  \end{equation*}
  representing a straight line rotating with the angular speed~$\omega$ counterclockwise.
  Therefore, starting initially from the $x$\nobreakdash-axis,
  the flow escapes at time~$\pi/(2\omega)$, which coincides with our finding above. 
\end{example}

\subsection{RDEs subject to Poisson switching}\label{subsc:RDEMJ}

The objective of this paper is to study the escape time of switched RDEs subject to Poisson jumps, which we describe below. 
Let $A, B\in \mathbb{R}^{d\times d}$ and consider the RDE~\eqref{eq:RDE} and another RDE
\begin{equation}\label{eq:BRDE}
  \frac{dY}{dt} = B_{21} + B_{22}Y -  YB_{11} - YB_{12}Y. 
\end{equation}
Although we focus on the case of two subsystems, most of the results presented in this paper can be extended to the case of more than two subsystems. 

We define a $\{0, 1\}$\nobreakdash-valued random variable~$z$ by the
stochastic differential equation~\cite{Hanlon2011a}
\begin{equation} \label{eq:z:SwitchingSignal}
  dz = (1-2z)dN,\quad z(0) \in \{0, 1\}
\end{equation}
where $N$ is the Poisson process of rate~$\lambda>0$. The stochastic process $z$ is a continuous-time Markov process with switching rate~$\lambda$. 

Now, we consider the following RDE with Poisson jump: 
\begin{equation} \label{eq:SwitchedRDE}
  \begin{multlined}
    dY
    = \left[z (A_{21} + A_{22}Y - Y A_{11} - YA_{12}Y) +
    (1-z)(B_{21} + B_{22}Y - Y B_{11} - YB_{12}Y)\right]dt. 
  \end{multlined}
\end{equation}
Our major objective in this paper is in the numerical calculation and the theoretical characterization of the average escape time of the switched RDE~\eqref{eq:SwitchedRDE}. Specifically, let $Y_0 \in \mathbb{R}^{(d-k)\times k}$ be arbitrary, and let $T_A(Y_0)$ ($T_B(Y_0)$) denote the \emph{mean escape time} of the switched RDE~\eqref{eq:SwitchedRDE} when $Y(0) = Y_0$ and $z(0) = 1$ ($z(0) = 0$, respectively). In Section~\ref{section:ExpectedEscapeTime}, we show that the mean escape times admit a representation as a power series, under the assumption that the escape time of the deterministic RDEs~\eqref{eq:RDE} and \eqref{eq:BRDE} are analytically available. Then, in Section~\ref{subsec:approx}, we show that, even when the escape time of the deterministic RDEs are not available, we can still approximately compute the mean escape times. 

\section{Analytical Charecterization of Mean Escape Time}
\label{section:ExpectedEscapeTime}

Let $t_A(Y_0)$ and $t_B(Y_0)$ denote the escape time of the deterministic RDEs~\eqref{eq:RDE} and \eqref{eq:BRDE}. Throughout this section, we place the following assumption. 

\begin{assumption}\label{asm:boudnedneddsss}
Assume that there exists a constant~$t_0 > 0$ such that
  \begin{equation} \label{eq:EscTimeBounded}
    t_i(Y_0) \leq t_0
  \end{equation}  
  for all $Y_0 \in \mathbb{R}^{(d-k)\times k}$ and $i \in \{A, B\}$. 
\end{assumption}

The main objective of this section is to prove the following theorem characterizing the mean escape time of the switched RDE~\eqref{eq:SwitchedRDE}. 

\begin{theorem}\label{thm:analytic}
Define the probability distribution function
  $F(t) = \int_{0}^t f(\tau)\,d\tau$
($t\geq 0$)
associated with the probability density function
  $f(t) = \lambda e^{-\lambda t}$ ($t\geq 0$). Define the real valued functions $g_1$ and $g_2$ defined on $\mathbb{R}^{(d-k) \times k}$  by 
\begin{equation*}
\begin{aligned}
  g_1(Y) &= \int_0^{t_A(Y)} \tau f(\tau) \,d\tau
  +
  t_A(Y) \left(1-F(t_A(Y))\right), 
  \\
    g_2(Y) &= \int_0^{t_B(Y)}\tau f(\tau)\,d\tau
    +
    t_B(Y) \left(1-F(t_B(Y))\right). 
\end{aligned}
\end{equation*}
Also, define the operators $M_1$ and $M_2$ acting on $L^\infty(\mathbb{R}^{(d-k) \times k})$ by 
\begin{equation*}
\begin{aligned}
  (M_1T)(Y) &= \int_0^{t_A(Y)}f(\tau) T(e^{A \tau}.Y)\,d\tau, 
  \\
  (M_2T)(Y) &= \int_0^{t_B(Y)}f(\tau) T(e^{B \tau}.Y)\,d\tau
  \end{aligned}
\end{equation*}
for all $Y \in \mathbb{R}^{(d-k) \times k}$.
  Then, we have 
  \begin{equation} \label{eq:PowerSeriesSolution}
    \begin{bmatrix}
      T_A\\T_B
    \end{bmatrix}
    =
    \sum^{\infty}_{k=0}M^k\begin{bmatrix}
      g_1\\g_2
    \end{bmatrix}, 
  \end{equation}
  where the operator~$M$ is given by 
  \begin{equation*}
  M =
  \begin{bmatrix}
    0   & M_1\\
    M_2 & 0
  \end{bmatrix}.
  \end{equation*}
\end{theorem}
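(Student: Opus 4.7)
The plan is to derive a renewal-type fixed-point equation for the pair $(T_A, T_B)$ by conditioning on the first Poisson jump time $\tau$, and then to invert the resulting operator using the Neumann series from Lemma~\ref{lemma:SmallGainBanach}. Starting from $z(0) = 1$ and $Y(0) = Y$, the waiting time $\tau$ has density $f$. On the event $\{\tau \geq t_A(Y)\}$ the deterministic $A$-subsystem escapes before any switching and contributes the time $t_A(Y)$, whereas on $\{\tau < t_A(Y)\}$ the trajectory follows $e^{A\tau}.Y$ up to the jump, after which the process restarts in mode $B$ with mean remaining escape time $T_B(e^{A\tau}.Y)$. Integrating these contributions against $f$ yields
\begin{equation*}
T_A(Y) = \int_0^{t_A(Y)} f(\tau)\bigl[\tau + T_B(e^{A\tau}.Y)\bigr]\,d\tau + t_A(Y)\bigl(1 - F(t_A(Y))\bigr) = g_1(Y) + (M_1 T_B)(Y),
\end{equation*}
and the symmetric argument starting from $z(0)=0$ gives $T_B = g_2 + M_2 T_A$. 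In operator form these identities read $(I - M)[T_A, T_B]^\top = [g_1, g_2]^\top$ on the Banach space $L^\infty(\mathbb{R}^{(d-k)\times k})^2$.

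Next I would verify the contraction estimate $\norm{M} < 1$ so that Lemma~\ref{lemma:SmallGainBanach} applies. Using Assumption~\ref{asm:boudnedneddsss}, a direct bound gives
\begin{equation*}
\abs{(M_i T)(Y)} \leq \norm{T}\, F(t_i(Y)) \leq \norm{T}\bigl(1 - e^{-\lambda t_0}\bigr)
\end{equation*}
for $i\in\{1,2\}$. Since $M$ has the purely off-diagonal block structure displayed in the statement, endowing the product space with the obvious max-type norm yields $\norm{M} \leq 1 - e^{-\lambda t_0} < 1$. Lemma~\ref{lemma:SmallGainBanach} then provides $(I-M)^{-1} = \sum_{k=0}^\infty M^k$, and applying this inverse to $[g_1,g_2]^\top$ delivers the claimed formula~\eqref{eq:PowerSeriesSolution}.

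The main obstacle I anticipate is the preparatory step of checking that $T_A$ and $T_B$ are genuinely bounded measurable functions on $\mathbb{R}^{(d-k)\times k}$, so that the fixed-point equation is actually posed in $L^\infty$. For the boundedness, I would note that within each deterministic epoch the probability of escaping before the next Poisson jump is at least $e^{-\lambda t_0}$; hence the number of epochs before escape is stochastically dominated by a geometric random variable, and each epoch contributes at most $t_0$ to the elapsed time, yielding the uniform bound $\norm{T_i} \leq t_0\, e^{\lambda t_0}$. Measurability of $t_A$ and $t_B$ in $Y_0$ follows from their characterization as the first zero of $\det U(t)$ via~\eqref{eq:def:UV}, from which measurability of $g_1$, $g_2$, and of the integral operators $M_1, M_2$ applied to bounded measurable $T$ can be extracted by standard Fubini-type arguments.
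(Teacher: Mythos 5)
Your proposal takes essentially the same route as the paper's proof: conditioning on the first Poisson jump yields the coupled renewal equations $T_A = g_1 + M_1 T_B$ and $T_B = g_2 + M_2 T_A$, and the bound $\abs{(M_i T)(Y)} \leq \norm{T}\,F(t_0)$ with $F(t_0) = 1 - e^{-\lambda t_0} < 1$ allows Lemma~\ref{lemma:SmallGainBanach} to invert $I-M$ as the Neumann series, exactly as in the paper. Your additional verification that $T_A$ and $T_B$ genuinely belong to $L^\infty$ (via the geometric-domination bound $\norm{T_i} \leq t_0 e^{\lambda t_0}$) and the measurability check are points the paper's proof leaves implicit, so they are a correct and welcome strengthening rather than a departure.
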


\begin{proof}
Let us temporarily assume $z(0) = 1$, i.e., let us suppose that the switched RDE~\eqref{eq:SwitchedRDE} initially starts from the first subsystem given in~\eqref{eq:RDE}. If no switch occurs after the initial time~$t=0$, then the solution of the RDE would escape at~$t = t_A(Y_0)$ by the definition of the mapping~$t_A$. On the other hand, suppose that the first switching occurs at a time~$t = \tau$. This $\tau$ must satisfy $\tau < t_A(Y_0)$. At this time instant, the solution of the switched RDE is equal to $e^{A \tau}.Y_0$ under the notation~\eqref{eq:notation}. Hence, by definition, the switched RDE will escape after $T_B(e^{A\tau}.Y_0)$ time units in average. Summarizing this argument, we obtain the following integral equation: 
\begin{equation*}
  \begin{multlined}
  T_A(Y_0)
  =
  \int^{t_A(Y_0)}_{0} f(\tau) [\tau + T_B(e^{A \tau}.Y_0)]\, d\tau
  +
  \left(1-F(t_A(Y_0))\right)t_A(Y_0), 
  \end{multlined}
\end{equation*}
which leads us to the functional equation
  $T_A = g_1 + M_1T_B$. 
In the same way, we can derive the functional equation
  $T_B = g_2 + M_2T_A$. 
Therefore, the mean escape times $T_A$ and~$T_B$ satisfy
\begin{equation*} 
  (I - M)\begin{bmatrix}
    T_A\\T_B
  \end{bmatrix}
  =
  \begin{bmatrix}
    g_1\\g_2
  \end{bmatrix}. 
\end{equation*}
Hence, by Lemma~\ref{lemma:SmallGainBanach}, to complete the proof of the theorem, 
 it is enough to show that the following claims hold true: 
\renewcommand{\labelenumi}{\alph{enumi})}
\begin{enumerate}
\item
$M$ is a continuous linear operator on the space~$L^\infty(\mathbb{R}^{(d-k) \times k})^2$;

\item
There holds that $\norm{M} < 1$.
\end{enumerate}

First, we can trivially confirm that $M$ is linear. To show its continuity, let us take arbitrary $T, T'\in L^\infty(\mathbb{R}^{(d-k) \times k})$. Then, we can show 
\begin{align}
\Norm{M\begin{bmatrix}T\\T'\end{bmatrix}}
&=
\sup_{W \in \mathbb{R}^{(d-k) \times k}}
\Norm{M\begin{bmatrix}T\\T'\end{bmatrix}(Y)}_\infty
\notag
\\
&=
\sup_{W \in \mathbb{R}^{(d-k) \times k}}
\Norm{\begin{bmatrix}
(M_1 T')(Y) \\
(M_2 T)(Y)
\end{bmatrix}}_\infty. 
\label{eq:norm<1 pre}
\end{align}
Hence, for every $Y\in \mathbb{R}^{(d-k) \times k}$, we can show 
\begin{align}
\Abs{(M_1T')(Y)}
&=
\Abs{\int_0^{t_A(Y)}f(\tau) T'(e^{A \tau}.Y)\,d\tau}
\notag
\\
&\leq
\int_0^{t_A(Y)}f(\tau) \abs{T'(e^{A \tau}.Y)}\,d\tau
\notag
\\
&\leq
\sup_{W \in \mathbb{R}^{(d-k) \times k}}{\abs{T'(Y)}}
\int_0^{t_0}f(\tau)\,d\tau
\notag
\\
&=
\norm{T'} F(t_0).
\label{eq:norm<1 1}
\end{align}
In the same way we can show 
\begin{equation}\label{eq:norm<1 2}
\Abs{(M_2T)(Y)} \leq \norm{T}F(t_0).
\end{equation}
The equation~\eqref{eq:norm<1 pre} together with \eqref{eq:norm<1 1} and
\eqref{eq:norm<1 2} yield
\begin{align}
\Norm{M\begin{bmatrix}T\\T'\end{bmatrix}}
&\leq
F(t_0)\Norm{\begin{bmatrix}\norm{T'}\\\norm{T}\end{bmatrix}}_\infty
\notag
\\
&=
F(t_0)\Norm{\begin{bmatrix}T\\T'\end{bmatrix}}. 
\label{eq:norm<1}
\end{align}
Therefore, we conclude that $M$ is a continuous linear operator. Furthermore, the second claim~b) follows immediately
from inequality~\eqref{eq:norm<1} because we know $F(t_0) < 1$. This completes the proof. 
\end{proof}

\begin{figure}[tbp]
\begin{center}
\includegraphics[width=\gwidth]{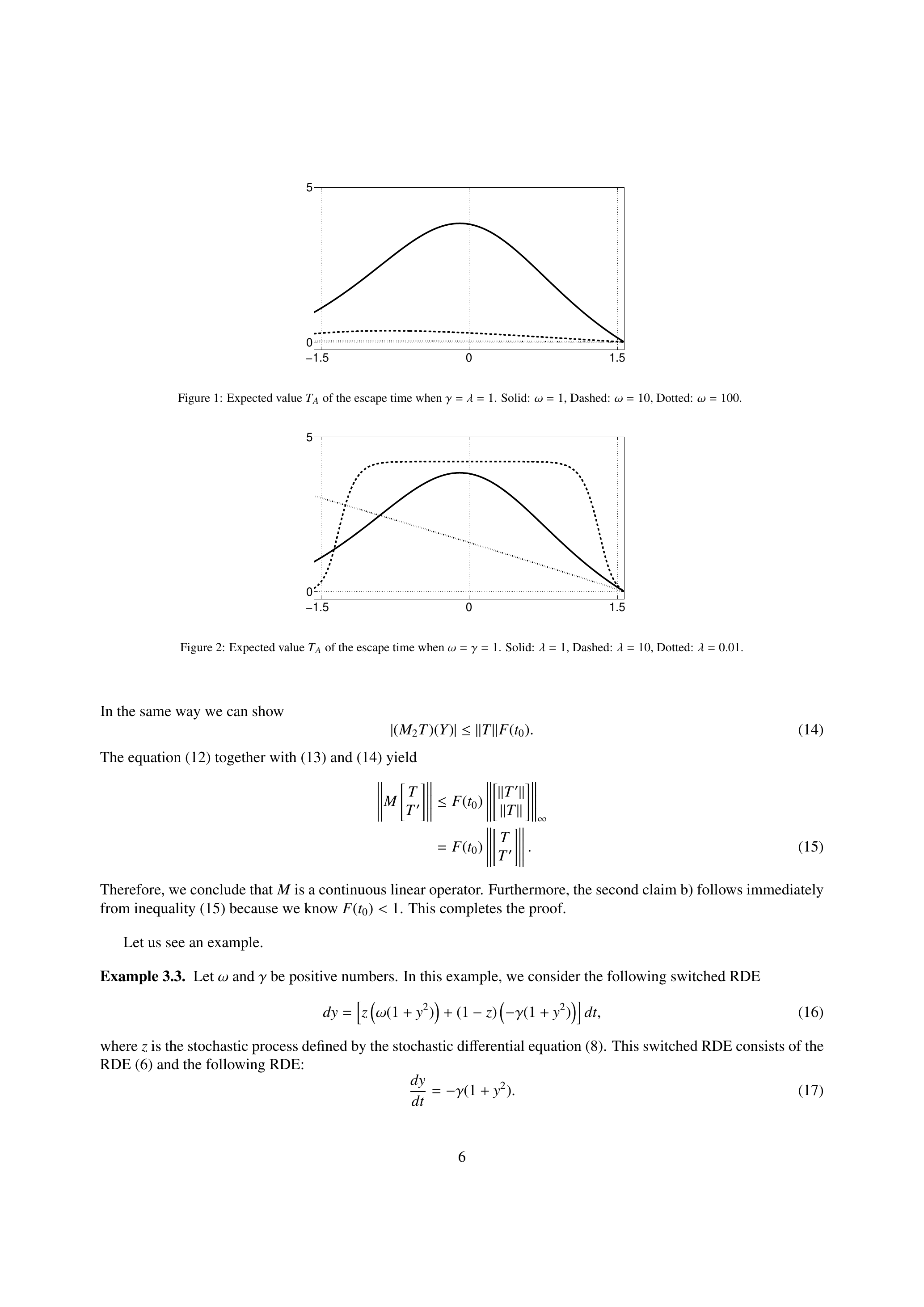}
\caption{Expected value~$T_A$ of the escape time when
$\gamma = \lambda = 1$. Solid: $\omega = 1$, Dashed: $\omega = 10$,
Dotted: $\omega = 100$. }
\label{Figure:MoveOmega}
\end{center}
\begin{center}
\includegraphics[width=\gwidth]{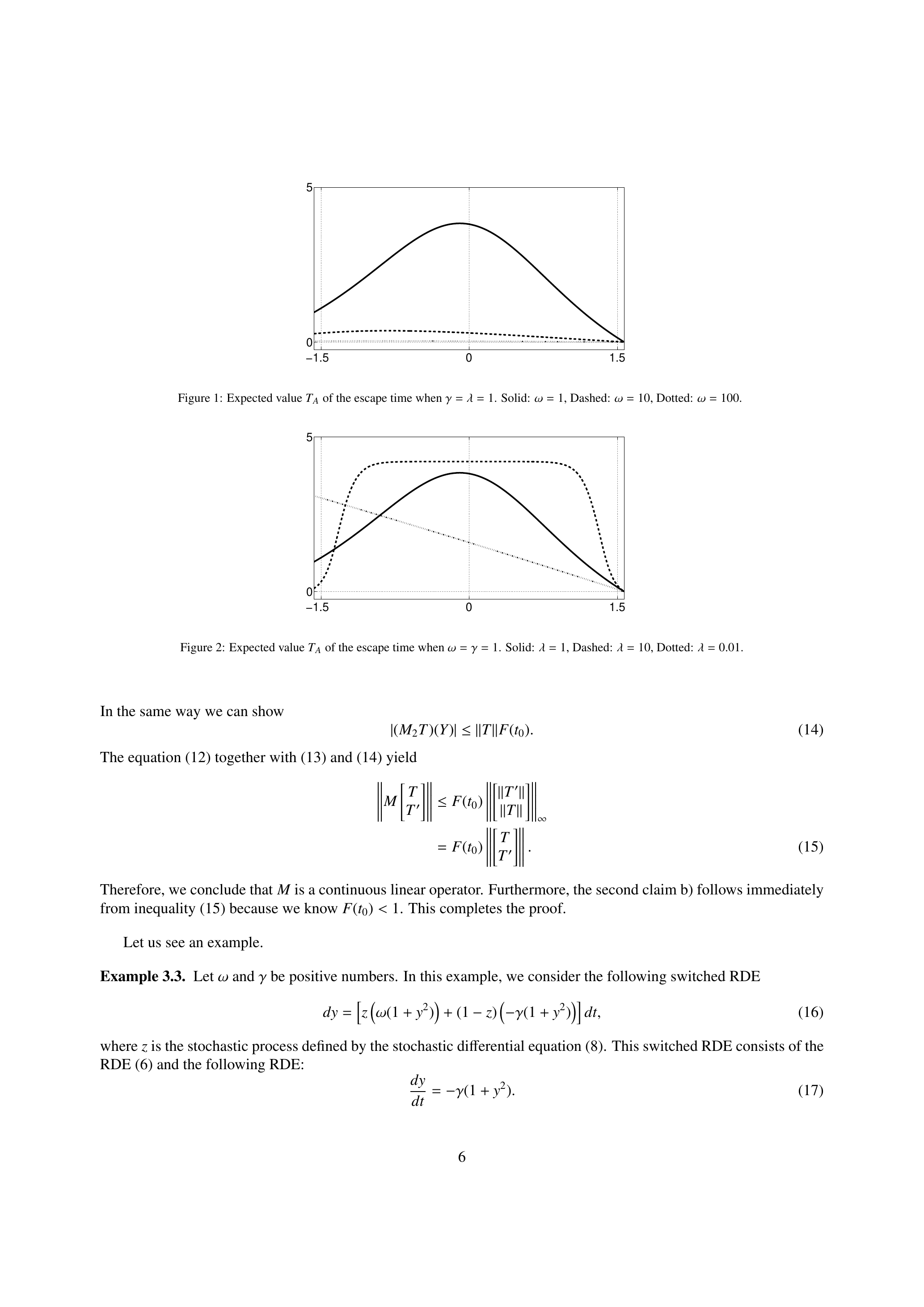}
\caption{Expected value~$T_A$ of the escape time when $\omega =\gamma = 1$.
Solid: $\lambda = 1$, Dashed: $\lambda = 10$, Dotted: $\lambda= 0.01$.}
\label{Figure:MoveLambda}
\end{center}
\end{figure}

Let us see an example. 

\begin{example}
Let $\omega$ and $\gamma$ be positive numbers. In this example, we consider the following switched RDE
\begin{equation}\label{eq:switchedRDEexample}
dy
=
\left[z\left(\omega (1+y^2)\right) + (1-z)\left(-\gamma(1+y^2)\right)\right]dt, 
\end{equation}
where $z$ is the stochastic process defined by the stochastic differential equation~\eqref{eq:z:SwitchingSignal}. This switched RDE consists of the 
RDE~\eqref{eq:ExampleRDE1} and the following RDE: 
\begin{equation} \label{eq:ExampleRDE2}
\frac{dy}{dt} = -\gamma (1+y^2).
\end{equation}
By regarding the RDEs as the local expressions of the rotations in~$G^1(\mathbb{R}^2)$ with the
angular speeds $\omega$ and $-\gamma$, we can find 
\begin{align*}
t_A(\theta) = \frac{(\pi/2) - \theta}{\omega},
\quad 
t_B(\theta) = \frac{\theta + (\pi/2)}{\gamma}
\notag
\end{align*}
where an element
in~$G^1(\mathbb{R}^2)$ is identified with its angle~$\theta$ measured from the positive $x$\nobreakdash-axis. This implies that the assumption~\eqref{eq:EscTimeBounded} is satisfied
with the constant~$t_0 = \max\{\pi/\omega,\pi/\gamma\}$.

We now use Theorem~\ref{thm:analytic} to find the mean escape time of the switched RDE~\eqref{eq:switchedRDEexample}. First, we fix $\gamma =\lambda = 1$ and change $\omega$ as $\omega = 1$, $10$, and $100$. In Fig.~\ref{Figure:MoveOmega}, we show the mean escape time~$T_A$ 
as a function of the initial angle~$\theta_0 \in [-\pi/2, \pi/2]$. The mean escape time in the figure is obtained by terminating the power series~\eqref{eq:PowerSeriesSolution} at
its 21st term. It can be observed that the mean escape time decreases in $\omega$. This is  because, the larger $\omega$, the 
earlier the rotation of the line to pass the critical line of~$y$\nobreakdash-axis, at which an escape occurs. 
We then fix $\omega = \gamma = 1$ and observe how the mean escape time depends on the switching rate~$\lambda$. In 
fig.~\ref{Figure:MoveLambda}, we show the mean escape time~$T_A$ for $\lambda = 0.01$, $1$, and~$10$. When
$\lambda = 0.01$, the mean escape time is close to the escape time of the deterministic RDE~\eqref{eq:ExampleRDE1}.
This is because, when the rate~$\lambda$ is small, a switching from the first
RDE~\eqref{eq:ExampleRDE1} to the second RDE~\eqref{eq:ExampleRDE2} rarely
occurs and, hence, the presence of switching is negligible.
\end{example}

\section{Approximate Computation of Mean Escape Time}\label{subsec:approx}

One of the potential difficulties in using the analytic formula in Theorem~\ref{thm:analytic} to compute the mean escape time of switched RDE~\eqref{eq:SwitchedRDE} is in computing the escape time
of deterministic RDEs~\eqref{eq:RDE} and \eqref{eq:BRDE}. For example, consider the RDE~\eqref{eq:RDE} on $\mathbb{R}^{1\times 2}$
determined by the matrix
\begin{equation*}
A = T^{-1}
\begin{bmatrix}
0   &  1   &  0  \\
-1  &  0   &  0  \\
0   &  0   &  1
\end{bmatrix}T,
\quad
T = \begin{bmatrix}
1  &  0  &  1  \\
0  &  1  &  0  \\
0  &  0  &  1
\end{bmatrix}
\end{equation*}
with the initial state
\begin{equation*}
Y_0 = \begin{bmatrix} 1 \\ 1 \end{bmatrix}.
\end{equation*}
We can show that the corresponding ERDE is the flow
\begin{equation*}
e^{At}\left(\psi(Y_0)\right)
=
\left\{
r
\begin{bmatrix}
2\cos t + \sin t - e^{t}  \\
\cos t - 2\sin t  \\
e^t
\end{bmatrix}
:
r\in \mathbb{R}
\right\}
\end{equation*}
in $P(\mathbb{R}^3)$. Therefore, under the canonical chart~$\psi$, this RDE
escapes when $2\cos t + \sin t - e^{t} = 0$. However it is not easy to calculate the zeros of this type of transcendental equation effectively.

The objective of this section is to provide an approximative method to compute the mean escape time of the switched RDE~\eqref{eq:SwitchedRDE}. In Subsection~\ref{sec:FixedInitialState}, we propose a method to approximately compute the escape time of the deterministic RDEs. Although this method allows us to use Theorem~\ref{thm:analytic} to approximately compute the mean escape time of the switched RDE~\eqref{eq:SwitchedRDE}, it is not clear if the computation is robust with respect to the computational error of the escape time of the deterministic RDEs. Therefore, in Subsection~\ref{sec:net}, we present a theorem confirming the computational robustness.

\subsection{Sequence converging to escape time}
\label{sec:FixedInitialState}

In this section, we give a procedure for approximately computing the escape time
of deterministic RDEs. We remark that, although there are many results concerning the occurrence of finite-time
escape phenomenon, the computation of escape time has not attracted much
attention. The result by~\cite{Getz1977} can be applied to only
symmetric RDEs. The lower estimate of the
escape time by~\cite{Jodar1995} can be applied for any Riccati
differential equation but their estimate tend to be conservative. 

The aim of this section is to prove the following theorem, which enables us to approximately
calculate the escape time of the deterministic RDE~\eqref{eq:RDE} with an arbitrary precision. Before stating the theorem, let us recall that the principal branch of the Lambert $W$-function (see, e.g., \cite{CorlessGonnetHareJeffreyKnuth1996}) $W(\cdot)$ is defined as the
inverse of the mapping $[0, \infty) \to [0, \infty) \colon x \mapsto
xe^x$.

\begin{theorem}\label{thm:ConvSequence}
Let $Y_0 \in \mathbb{R}^{(d-k)\times k}$ be arbitrary. Assume that $t_A(Y_0) < \infty$. Define the function $\Delta \colon [0, t_A(Y_0)) \to (0,
\infty)$ by 
\begin{equation*}
\Delta(t) = W\left(\frac{\norm A}{\Norm{\begin{bmatrix}
I&0
\end{bmatrix}A}\Norm{\begin{bmatrix}
I\\Y(t;Y_0)
\end{bmatrix}}} \right)\frac{1}{\norm A}. 
\end{equation*}
Then, the real sequence $\{t_n\}_{n=0}^\infty$ defined by 
$t_0 = 0$ and the difference equation 
\begin{equation*}
t_{n+1} = t_n + \Delta(t_n),\ n=0, 1, \dotsc
\end{equation*}
satisfies
\begin{equation*}
\lim_{n\to\infty}t_n = t_A(Y_0). 
\end{equation*}
\end{theorem}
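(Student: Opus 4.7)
The plan is to show that $\{t_n\}$ is a bounded monotone-increasing sequence whose supremum equals $t_A(Y_0)$. The definition of $\Delta(t)$ through the Lambert $W$-function is not arbitrary: it is the largest step $\Delta$ for which Lemma~\ref{lemma:SmallGain}(1) is able to certify, via a standard exponential bound, that the top block $U$ in the partition~\eqref{eq:def:UV} associated with the current state $Y(t;Y_0)$ remains invertible after $\Delta$ additional time units. Recovering this interpretation drives the whole argument.

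The first step is to show that whenever $t_n < t_A(Y_0)$, one has $\Delta(t_n) > 0$ and $t_{n+1} \leq t_A(Y_0)$. Using $e^{A\Delta}-I = A\Delta\int_0^1 e^{As\Delta}\,ds$ together with $\norm{e^{As\Delta}} \leq e^{\norm{A}\Delta}$, one obtains
\begin{equation*}
\Norm{\begin{bmatrix}I & 0\end{bmatrix}\bigl(e^{A\Delta}-I\bigr)\begin{bmatrix}I\\Y(t_n;Y_0)\end{bmatrix}}
\leq
\Norm{\begin{bmatrix}I & 0\end{bmatrix}A}\,\Delta\,e^{\norm{A}\Delta}\,\Norm{\begin{bmatrix}I\\Y(t_n;Y_0)\end{bmatrix}}.
\end{equation*}
Requiring the right-hand side to be strictly less than $1$ produces the inequality $\norm A\,\Delta\,e^{\norm A\Delta} < \norm A / (\Norm{\begin{bmatrix}I & 0\end{bmatrix}A}\Norm{\begin{bmatrix}I\\Y(t_n;Y_0)\end{bmatrix}})$, and inverting with the principal branch of $W$ produces precisely the threshold $\Delta(t_n)$ of the statement. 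Hence, by Lemma~\ref{lemma:SmallGain}(1), the top block $U(s)$ of $e^{As}\begin{bmatrix}I\\Y(t_n;Y_0)\end{bmatrix}$ is invertible for every $s\in[0,\Delta(t_n))$, so the solution cannot escape within $\Delta(t_n)$ time units of $t_n$. Combined with the semigroup identity $t_A(Y_0) = t_n + t_A(Y(t_n;Y_0))$ coming from~\eqref{eq:notation}, this yields $t_{n+1}\leq t_A(Y_0)$.

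With this step in hand, monotonicity of $\{t_n\}$ is immediate from $\Delta(t_n)>0$, and the sequence therefore converges to some limit $t_\infty\leq t_A(Y_0)$. To upgrade to equality I would argue by contradiction: if $t_\infty<t_A(Y_0)$ then $Y(t_\infty;Y_0)$ is finite, the argument of $W$ in $\Delta(t_\infty)$ is a strictly positive real number, and continuity of $W$ together with continuity of $Y(\cdot;Y_0)$ on $[0,t_A(Y_0))$ give $\Delta(t_n)\to\Delta(t_\infty)>0$. This contradicts $\Delta(t_n) = t_{n+1}-t_n \to 0$, forcing $t_\infty = t_A(Y_0)$.

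I expect the main obstacle to be the first step: producing the specific exponential-type bound that converts the invertibility test into an inequality of the form $xe^x < c$ solvable by $W$, and disposing of the degenerate cases. The case $\norm A = 0$ is excluded by $t_A(Y_0)<\infty$ because the RDE is then constant and admits no finite escape, while if $\Norm{\begin{bmatrix}I & 0\end{bmatrix}A} = 0$ the argument of $W$ should be read as $+\infty$, in which case $\Delta(t_n)$ is infinite and the sequence attains $t_A(Y_0)$ in a single step.
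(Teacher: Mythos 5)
Your proposal follows essentially the same route as the paper's own proof: your ``first step'' is precisely the paper's Lemma~\ref{lem:Jodar}, your invertibility test is the paper's use of Lemma~\ref{lemma:SmallGain}(1), your exponential bound plays the role of Lemma~\ref{lemma:ExpEstimate} (you derive it from the integral representation $e^{A\Delta}-I=A\Delta\int_0^1 e^{As\Delta}\,ds$ instead of the power series, a cosmetic difference), your semigroup identity is the paper's Lemma~\ref{lem:}, and your concluding monotone-convergence argument with the contradiction via continuity and non-vanishing of $\Delta$ is word-for-word the paper's proof of Theorem~\ref{thm:ConvSequence}. Your reverse-engineering of $\Delta$ as the largest step for which the small-gain test certifies invertibility of the top block $U$ is exactly the right reading of the Lambert-$W$ definition.

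There is, however, one genuine (though repairable) gap: strictness at the endpoint. Bounding $\norm{e^{As\Delta}}\leq e^{\norm{A}\Delta}$ uniformly in $s$ gives a non-strict estimate, and at $s=\Delta(t_n)$ the right-hand side equals exactly $1$ by the very definition of $W$; consequently you certify invertibility of $U(s)$ only on the half-open interval $[0,\Delta(t_n))$ and conclude only $t_{n+1}\leq t_A(Y_0)$. Since $\Delta$ is defined only on $[0,t_A(Y_0))$, equality would leave $\Delta(t_{n+1})$ --- and with it the remainder of the sequence --- undefined, and your contradiction step tacitly assumes $t_n<t_A(Y_0)$ for every $n$. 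The paper closes this by proving the \emph{strict} inequality $\norm{M(e^{At}-I)}<\norm{MA}\,\abs{t}\,e^{\norm{A}\abs{t}}$ in Lemma~\ref{lemma:ExpEstimate} (comparing $\sum_{i\geq 0}1/(i+1)!$ with $\sum_{i\geq 0}1/i!$), which yields $\norm{U(s)-I_k}<1$ on the closed interval $[0,\Delta(t_n)]$ and hence the strict containment $[t,t+\Delta(t)]\subset[0,t_A(Y_0))$ of Lemma~\ref{lem:Jodar}; your own integral representation delivers the same strictness if you retain $\int_0^1 e^{\norm{A}s\Delta}\,ds=(e^{\norm{A}\Delta}-1)/(\norm{A}\Delta)<e^{\norm{A}\Delta}$ rather than the uniform bound. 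A further minor slip: your remark on the degenerate case $\begin{bmatrix}I&0\end{bmatrix}A=0$ is off, since then $\frac{d}{ds}U(s)\equiv 0$, so $U(s)\equiv I$ and no finite escape occurs at all; the case is excluded by the hypothesis $t_A(Y_0)<\infty$, rather than the escape time being attained in a single infinite step.
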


In the rest of this subsection, we present the proof of Theorem~\ref{thm:ConvSequence}. 
We start by recalling the following lemma, which was implicitly stated in~\cite{Jodar1995} and shall be proved in this paper for the sake of completeness. 

\begin{lemma}\label{lem:Jodar}
Let $\Delta$ be as in Theorem~\ref{thm:ConvSequence}. If $t\in \left[0,
t_A(Y_0)\right)$ then $[t, t+\Delta(t)]\subset \left[0,
t_A(Y_0)\right)$. 
\end{lemma}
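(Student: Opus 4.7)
My approach combines a time-shift reduction via the semigroup property of $e^{At}$ with a power-series bound on $\norm{U(t) - I}$, closed off by the small-gain invertibility criterion in Lemma~\ref{lemma:SmallGain}(1). First, the representation~\eqref{eq:def:UV} together with the semigroup property of the matrix exponential yields the identity $Y(t+s;Y_0) = Y(s;Y(t;Y_0))$ on the joint existence interval, from which I would deduce that $t_A(Y_0) = t + t_A(Y(t;Y_0))$ whenever $t < t_A(Y_0)$. Since the formula for $\Delta(t)$ depends on $Y_0$ only through the current state $Y(t;Y_0)$, it coincides with the value at time~$0$ of the analogous function for the shifted initial value problem, so the lemma reduces to the single assertion that, for every $Z\in \mathbb{R}^{(d-k)\times k}$,
\begin{equation*}
\frac{1}{\norm{A}}\,W\!\left(\frac{\norm{A}}{\Norm{\begin{bmatrix}I & 0\end{bmatrix}A}\,\Norm{\begin{bmatrix}I\\Z\end{bmatrix}}}\right) < t_A(Z).
\end{equation*}

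Next, I would exploit~\eqref{eq:def:UV} once more to identify $t_A(Z)$ with the first time at which $U(t) = \begin{bmatrix}I & 0\end{bmatrix}e^{At}\begin{bmatrix}I\\Z\end{bmatrix}$ fails to be invertible, and estimate $\norm{U(t) - I}$ by expanding $e^{At} - I = At\sum_{j=0}^{\infty}(At)^j/(j+1)!$. Distributing the operator norm and using $\norm{A^j}\le \norm{A}^j$ term by term would give
\begin{equation*}
\norm{U(t) - I} \le \Norm{\begin{bmatrix}I & 0\end{bmatrix}A}\,\Norm{\begin{bmatrix}I\\Z\end{bmatrix}}\, t\,\sum_{j=0}^{\infty}\frac{(\norm{A}t)^j}{(j+1)!}.
\end{equation*}
Crucially, $1/(j+1)! < 1/j!$ for every $j \ge 1$, so for any $t > 0$ this sum is strictly smaller than $e^{\norm{A}t}$, yielding the strict estimate $\norm{U(t) - I} < \Norm{\begin{bmatrix}I & 0\end{bmatrix}A}\,\Norm{\begin{bmatrix}I\\Z\end{bmatrix}}\, t\,e^{\norm{A}t}$.

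The conclusion would then follow from the defining identity $W(c)\,e^{W(c)} = c$ of the Lambert $W$-function: the right-hand side of the strict estimate above equals exactly $1$ at $t = \Delta(0)$, so $\norm{U(\Delta(0)) - I} < 1$, and Lemma~\ref{lemma:SmallGain}(1) guarantees that $U(\Delta(0))$ is invertible. Since the same bound holds at every $t \in [0, \Delta(0)]$, the matrix $U(t)$ stays nonsingular on the entire closed interval, $Y(\cdot;Z)$ extends through it, and hence $t_A(Z) > \Delta(0)$, as required.

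The step I expect to be the most delicate is securing the \emph{strict} inequality in the series estimate: only strictness delivers $t_A(Z) > \Delta(0)$ rather than the weaker $t_A(Z) \ge \Delta(0)$, and only the former suffices to place the closed interval $[t, t + \Delta(t)]$ inside the half-open interval $[0, t_A(Y_0))$ demanded by the lemma. Everything else---the semigroup-based time shift, the exponential-series expansion, and the combination of Lemma~\ref{lemma:SmallGain}(1) with the Lambert $W$ identity---is a fairly mechanical assemblage once that strict comparison has been extracted.
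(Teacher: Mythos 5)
Your proposal is correct and follows essentially the same route as the paper: the same reduction to $t=0$ via the time shift $t_A(Y(t;Y_0)) = t_A(Y_0)-t$, the same strict power-series bound $\norm{U(s)-I} < \Norm{\begin{bmatrix}I&0\end{bmatrix}A}\,\abs{s}\,e^{\norm{A}\abs{s}}\Norm{\begin{bmatrix}I\\Y_0\end{bmatrix}}$ (which the paper isolates as Lemma~\ref{lemma:ExpEstimate}, while you inline it), and the same closing step combining the Lambert $W$ identity $W(c)e^{W(c)}=c$ with Lemma~\ref{lemma:SmallGain}(1). Your explicit remark that strictness of the series estimate is what places the closed interval $[t, t+\Delta(t)]$ strictly inside $[0, t_A(Y_0))$ is exactly the role the strict inequality plays in the paper's argument, merely made more explicit.
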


\begin{pf}
Without loss of generality we can assume $t=0$ because otherwise we can
reduce the problem to the case $t=0$ by considering the Riccati
differential equation \eqref{eq:RDE} with the initial state $Y(t;Y_0)$. 

Let $t=0$. We need to show that the matrix $U(s)$ defined by
\eqref{eq:def:UV} is invertible if $0\leq s\leq \Delta(0)$. Since $U(0)
= I$ is clearly invertible we can assume $0<s\leq \Delta(0)$. By
Lemma~\ref{lemma:SmallGain} it is sufficient to show that $\norm{U(s)-I}
< 1$. Since 
\begin{equation*}
U(s)
=
\begin{bmatrix}
I_k&0
\end{bmatrix}e^{As}\begin{bmatrix}
I_k\\Y_0
\end{bmatrix}, 
\end{equation*}
an easy computation shows that 
\begin{equation*}
U(s) - I_k
=
\begin{bmatrix}
I_k&0
\end{bmatrix}(e^{As}-I_{d})\begin{bmatrix}
I_k\\Y_0
\end{bmatrix}. 
\end{equation*}
Therefore, by Lemma~\ref{lemma:ExpEstimate}, if $0<s\leq \Delta(0)$ then 
\begin{equation*}
\begin{aligned}
\norm{U(s) - I_k}
&<
\Norm{
\begin{bmatrix}
I_k&0
\end{bmatrix}A
}
\abs{s}e^{\norm{A}\abs{s}}\Norm{
\begin{bmatrix}
I_n\\Y_0
\end{bmatrix}
}
\\
&\leq
\Norm{
\begin{bmatrix}
I_k&0
\end{bmatrix}A
}
\Delta(0)^{\norm{A}\Delta(0)}\Norm{
\begin{bmatrix}
I_k\\Y_0
\end{bmatrix}
}
\\
&= 1, 
\end{aligned}
\end{equation*}
where in the last equation we used the definition of~$\Delta$ and the
Lambert $W$ function. 
\end{pf}

We will also use the following estimate of matrix exponentials. 

\begin{lemma}\label{lemma:ExpEstimate}
Let $M$ and $A$ be nonzero square matrices with the same dimension. If
$t\neq 0$  then
$\norm{M(e^{At}-I)} < \norm{MA} \abs{t} e^{\norm{A}\abs{t}}$. 
\end{lemma}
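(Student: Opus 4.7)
The plan is to start from the Taylor expansion of the matrix exponential and factor out one power of $At$. Concretely, write
\begin{equation*}
e^{At} - I = \sum_{k=1}^{\infty} \frac{(At)^k}{k!} = At \sum_{k=0}^{\infty} \frac{(At)^k}{(k+1)!},
\end{equation*}
which is a standard reindexing that isolates a single factor $At$. Multiplying on the left by $M$ and using submultiplicativity of the operator norm together with $\norm{MA^{k+1}} \leq \norm{MA}\,\norm{A}^k$ yields
\begin{equation*}
\norm{M(e^{At} - I)} \leq \norm{MA}\,\abs{t} \sum_{k=0}^{\infty} \frac{(\norm{A}\abs{t})^k}{(k+1)!}.
\end{equation*}

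The second step is to compare the scalar series on the right with the ordinary exponential series $e^{\norm{A}\abs{t}} = \sum_{k=0}^{\infty} (\norm{A}\abs{t})^k/k!$. Termwise one has $1/(k+1)! \leq 1/k!$, with strict inequality for every $k \geq 1$. Since $A$ and $t$ are both nonzero, $\norm{A}\abs{t} > 0$, so the $k=1$ term of the difference of the two series is strictly positive. This upgrades the termwise weak inequality to a strict inequality between the sums, giving
\begin{equation*}
\sum_{k=0}^{\infty} \frac{(\norm{A}\abs{t})^k}{(k+1)!} < e^{\norm{A}\abs{t}},
\end{equation*}
and combining with the previous display produces the claimed bound.

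The only real obstacle, if one can call it that, is ensuring the inequality is strict rather than merely non-strict. The nonvanishing hypotheses on $A$ and $t$ are used precisely at this point: they guarantee that at least one term in the comparison contributes a strictly positive gap, so the loose factorial-vs-factorial bound does not collapse to an equality. All other steps are routine manipulations of absolutely convergent series and submultiplicativity of $\norm{\cdot}$.
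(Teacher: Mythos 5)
Your proof is correct and follows essentially the same route as the paper's: factor out $At$ from the Taylor series of $e^{At}-I$, bound termwise via submultiplicativity, and compare $\sum_k (\norm{A}\abs{t})^k/(k+1)!$ against $e^{\norm{A}\abs{t}}$ using $1/(k+1)! < 1/k!$ for $k\geq 1$. If anything, you are slightly more careful than the paper, which asserts the strict inequality between the two series without spelling out that $\norm{A}\abs{t}>0$ is what makes it strict (note, though, that both arguments silently require $\norm{MA}>0$ for the final strict bound, which the stated hypotheses $M\neq 0$, $A\neq 0$ do not quite guarantee).
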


\begin{pf}
By the definition of exponential matrices we have 
$M(e^{At}-I) = MAt \sum_{i=1}^\infty \frac{A^{i-1}{t}^{i-1}}{i!}$. 
Therefore it easily follows that 
\begin{equation*}
\begin{aligned}
\norm{M(e^{At}-I)}
&\leq
\norm{MA}\abs{t} \sum_{i=0}^\infty \frac{\norm{A}^{i}\abs{t}^{i}}{(i+1)!}
\\
&<
\norm{MA}\abs{t} \sum_{i=0}^\infty \frac{\norm{A}^{i}\abs{t}^{i}}{i!}
\\
&=
\norm{MA}\abs{t} e^{\norm{A}\abs{t}}. 
\end{aligned}
\end{equation*}
\end{pf}

Now we are ready to prove Theorem~\ref{thm:ConvSequence}. 

\begin{pf}[Theorem~\ref{thm:ConvSequence}]
Notice that the function $\Delta$ is continuous because both $W$ and $Y$ are
continuous. Moreover, since $W(x) = 0$ if and only if $x=0$, the
function $\Delta$ never vanishes. 
Now, by Lemma~\ref{lem:Jodar}, the real sequence $\{t_n\}_{n=0}^\infty$ is
bounded from above by $t_A(Y_0)<\infty$. Also $\{t_n\}_{n=0}^\infty$ is
increasing by its definition. Therefore the sequence has a limit, say,
$\tilde t \in \mathbb{R}$. Lemma~\ref{lem:Jodar} immediately shows
$\tilde t \leq t_A(Y_0)$. Assume $\tilde t < t_A(Y_0)$ to derive a
contradiction. Since the sequence $\{t_n\}_{n=0}^\infty$ is convergent,
the sequence~$\{\Delta(t_n)\}_{n=0}^\infty$ must  converge to~$0$. By
the continuity of~$\Delta$ on $[0, t_A(Y_0))$ we have $\Delta(\tilde t)
= 0$, but this contradicts to the fact  that $\Delta$ never vanishes on
$[0, t_A(Y_0))$. Hence $\tilde t = t_A(Y_0)$. 
\end{pf}

Let us see examples. 

\begin{example} \label{exmp:scalarfixed}
Consider the scalar RDE 
\begin{equation}\label{eq:RDE:dWdt=2W+W^2}
\frac{dy}{dt} = y^2 + 2y
\end{equation}
with the initial state $y(0) = 1$. This RDE
is associated with the matrix
\begin{equation*}
A = \begin{bmatrix}
-1&-1\\0&1
\end{bmatrix}. 
\end{equation*}
Since $U(t) = (3e^{-t} - e^t)/2$, we can check that the differential
equation  escapes precisely at $t = (\log 3)/2 = 0.549306$. The sequence
$\{t_n\}_{n=1}^\infty$ in Theorem~\ref{thm:ConvSequence} gives $t_{10} =
0.549290$ and $t_{20} = 0.549306$. 
Fig.~\ref{fig:EscapeTimeScalarFixedInit} shows the graphs of~$t_n$ and
$\log(\Delta(t_n))$. 

\begin{figure}[tb] 
\centering
\includegraphics[width=\mywidth]{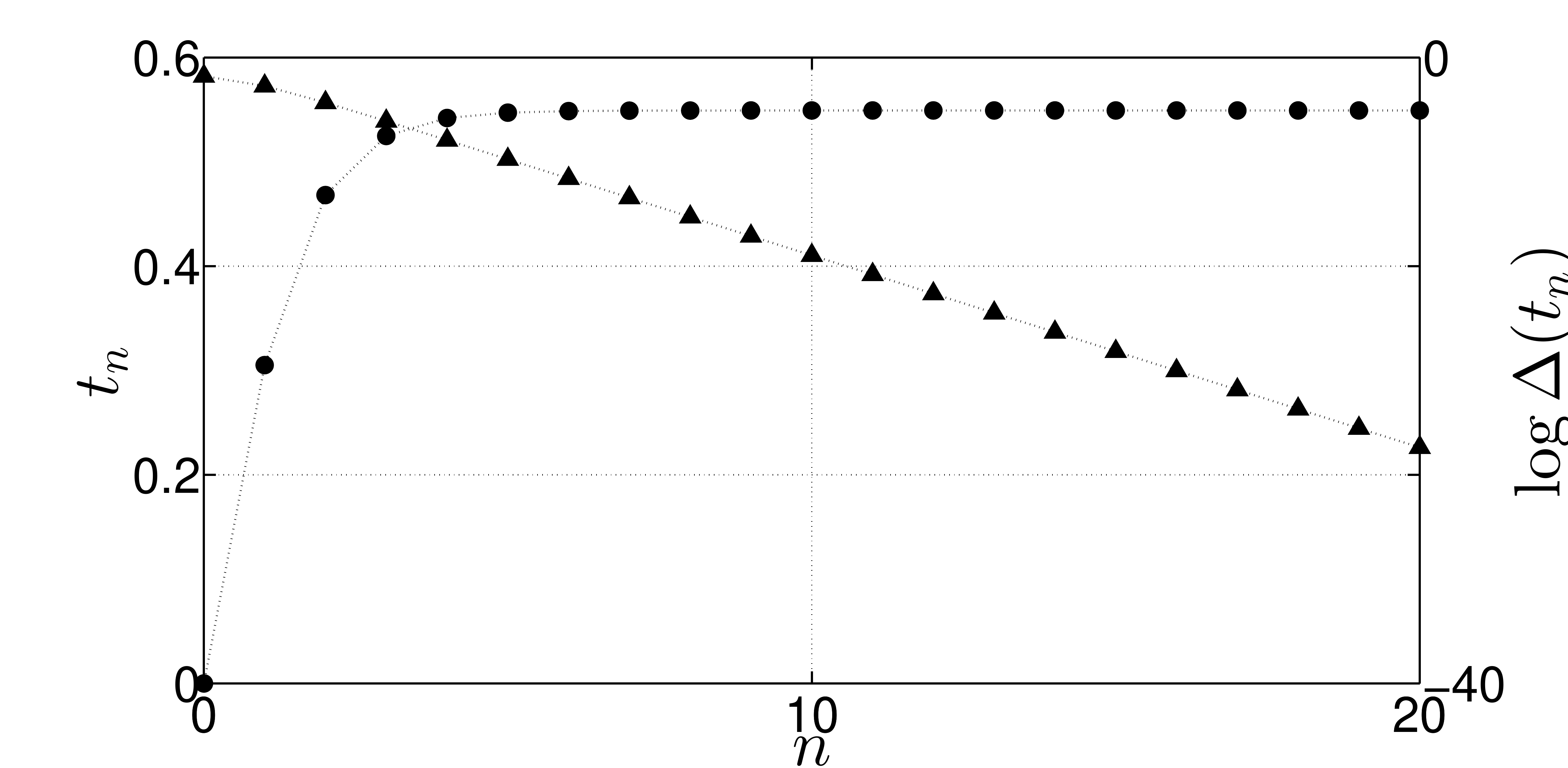}
\caption{Escape time of RDE \eqref{eq:RDE:dWdt=2W+W^2}. Circle: $t_n$. Triangle: $\log\Delta(t_n)$}
\label{fig:EscapeTimeScalarFixedInit}
\end{figure}
\end{example}

\begin{example}
Consider the vector-valued RDE
\begin{equation}\label{eq:RDE:exmplevec}
  \frac{dY}{dt} =
  \begin{bmatrix}
0\\-1
  \end{bmatrix}
  +
  \begin{bmatrix}
  2&-1\\3&-3
  \end{bmatrix}Y + Y - Y\begin{bmatrix}
2&-1
  \end{bmatrix}Y
\end{equation}
with the initial condition 
\begin{equation*}
Y_0 = \begin{bmatrix}-0.5\\-0.5\end{bmatrix}. 
\end{equation*}
Fig.~\ref{fig:EscapeTimeVecFixedInit} shows the graph of~$t_n$ and
$\log(\Delta(t_n))$. Notice that in this case the scalar function $U(t)$
has a complicated form and it is not as easy to find its zeros as in
Example~\ref{exmp:scalarfixed}. 

\begin{figure}[tb] 
\centering
\includegraphics[width=\mywidth]{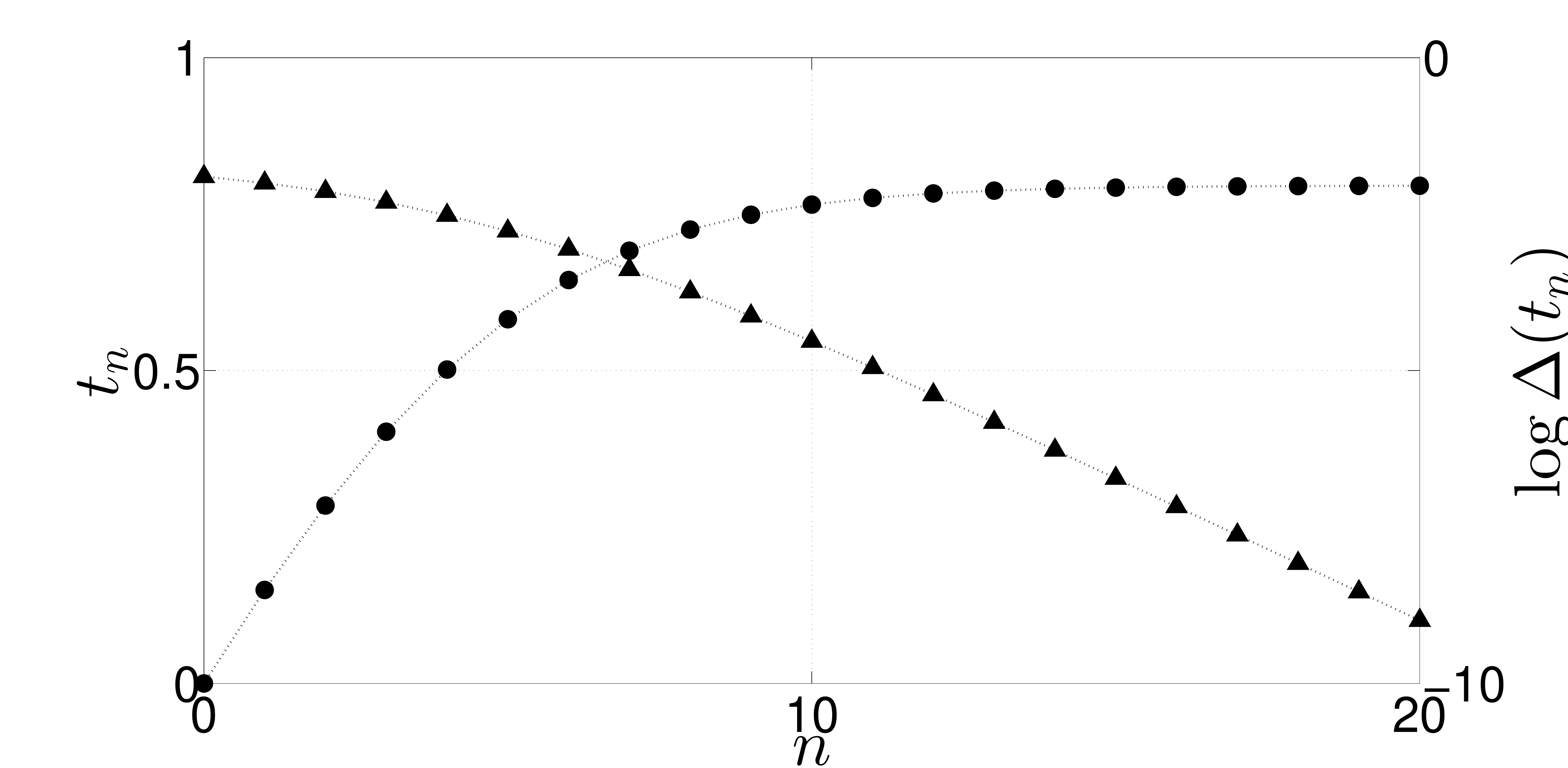}
\caption{Escape time of RDE \eqref{eq:RDE:exmplevec}. Circle: $t_n$. Triangle: $\log\Delta(t_n)$}
\label{fig:EscapeTimeVecFixedInit}
\end{figure}
\end{example}

Theorem~\ref{thm:ConvSequence} allows us to approximately compute the escape time of the deterministic RDEs for a fixed initial condition. On the other hand, to use Theorem~\ref{thm:analytic} for the computation of the mean escape time of the switched RDE~\eqref{eq:SwitchedRDE}, we need the escape time of each of the deterministic RDEs~\eqref{eq:RDE} and \eqref{eq:BRDE} with arbitrary initial conditions. However, it is not feasible to apply the theorem to all the possible initial conditions. To fill in this gap, the following trivial lemma is useful. 

\begin{lemma}\label{lem:}
 For any $Y_0 \in \mathbb{R}^{(d-k)\times k}$ and  $0\leq t<t_A(Y_0)$ we
have $t_A(Y(t;Y_0)) = t_A(Y_0) - t$. 
\end{lemma}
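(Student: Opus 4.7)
The plan is to derive the identity from the semigroup property of the flow $e^{At}$ on the Grassmannian, combined with the characterization of the escape time as the first instant at which the induced flow leaves the canonical chart $G_0^k(\mathbb{R}^d)$.

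First I would recall that, by the correspondence \eqref{eq:RDE--ERDE} between the RDE and its induced ERDE, the mapping $Y_0 \mapsto e^{At}.Y_0$ defined in \eqref{eq:notation} is nothing but the expression, in the chart $\psi^{-1}$, of the one-parameter group $F_0 \mapsto e^{At}(F_0)$ acting on $G^k(\mathbb{R}^d)$. Consequently, whenever the relevant quantities are defined in the chart, one has the semigroup identity $Y(s;\,Y(t;Y_0)) = Y(s+t;\,Y_0)$. This is the only structural ingredient I need.

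Next I would prove the two inequalities separately. Fix $Y_0$ and $0 \le t < t_A(Y_0)$, and set $\tilde Y_0 = Y(t;Y_0)$. For the inequality $t_A(\tilde Y_0) \ge t_A(Y_0) - t$, take any $s$ with $0 \le s < t_A(Y_0) - t$. Then $s + t < t_A(Y_0)$, so $Y(s+t; Y_0)$ exists in $\mathbb{R}^{(d-k)\times k}$; by the semigroup identity this equals $Y(s; \tilde Y_0)$, which therefore also exists. Hence the escape time from $\tilde Y_0$ is at least $t_A(Y_0) - t$. For the reverse inequality, suppose for contradiction that $t_A(\tilde Y_0) > t_A(Y_0) - t$. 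Then the value $Y(t_A(Y_0) - t; \tilde Y_0)$ would exist in $\mathbb{R}^{(d-k)\times k}$, and the semigroup identity would force $Y(t_A(Y_0); Y_0)$ to exist as well, contradicting the definition of $t_A(Y_0)$ as the first instant at which the solution ceases to exist. Combining the two inequalities yields the claimed equality.

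There is no substantive obstacle in this argument; the only point that needs care is the justification of the semigroup identity at the level of the chart, which is precisely why I would phrase everything first on the Grassmannian via \eqref{eq:ERDE} before pulling back through $\psi^{-1}$.
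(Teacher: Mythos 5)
Your proof is correct. The paper offers no proof at all for this statement---it is introduced as a ``trivial lemma''---so your write-up simply supplies the argument the authors leave implicit, and both halves are sound: the inequality $t_A(Y(t;Y_0))\geq t_A(Y_0)-t$ follows from existence plus the semigroup identity, and the reverse inequality from the concatenation/extension argument, which you correctly ground in the Grassmannian flow \eqref{eq:ERDE} so that ``existence at time $s$'' means precisely that $e^{A(t+s)}(\psi(Y_0))$ lies in the chart $G_0^k(\mathbb{R}^d)$. One remark: the paper's own machinery in \eqref{eq:def:UV} gives an even shorter route that avoids the two-inequality structure. Writing $\tilde Y_0 = Y(t;Y_0) = V(t)U(t)^{-1}$, one has
\begin{equation*}
e^{As}\begin{bmatrix} I \\ \tilde Y_0 \end{bmatrix}
= e^{A(t+s)}\begin{bmatrix} I \\ Y_0 \end{bmatrix} U(t)^{-1},
\end{equation*}
so the upper block for the time-shifted problem is $U(t+s)U(t)^{-1}$, which is singular exactly when $U(t+s)$ is; since the escape time is the first singularity of $U$, the identity $t_A(\tilde Y_0) = t_A(Y_0) - t$ drops out in one line. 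Your version buys generality (it only uses uniqueness of solutions and the flow property, not the linear representation), while the $U$-factorization argument is the likely reason the authors felt entitled to call the lemma trivial.
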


Lemma~\ref{lem:} suggests that one instance of the computation of the escape time for a single initial condition allows us to find the escape time with various initial conditions. Specifically,
finding the escape time for an initial state~$Y_0$ by using Theorem~\ref{thm:ConvSequence} gives us the
escape time for a \emph{family} of initial states
$\{Y(t;Y_0)\}_{t= t_0, t_1, \dotsc}$. This observation leads us to
Algorithm~\ref{alg:function}, which calculates the escape time for
several many initial states efficiently. 

\begin{algorithm}[tb]
\caption{Calculation of the escape time as a function of initial states}
\label{alg:function}
\begin{algorithmic}[1]
\STATE Let $N$ be a positive integer. $D = \emptyset$.
\REPEAT 
\STATE Randomly choose an initial state $Y_0$. 
\STATE Compute the sequence $\{t_n\}_{n=0}^N$
\STATE $D \leftarrow D \cup \{( Y(t_N-t_n, Y_0), t_n\}_{n=0}^N$
\UNTIL Projection of $D$ into its first component becomes dense in the set of the initial states. 
\end{algorithmic}
\end{algorithm}

We illustrate the effectiveness of Algorithm~\ref{alg:function} in the following example. 

\begin{figure}[tb] 
\centering
\includegraphics[width=\mywidth]{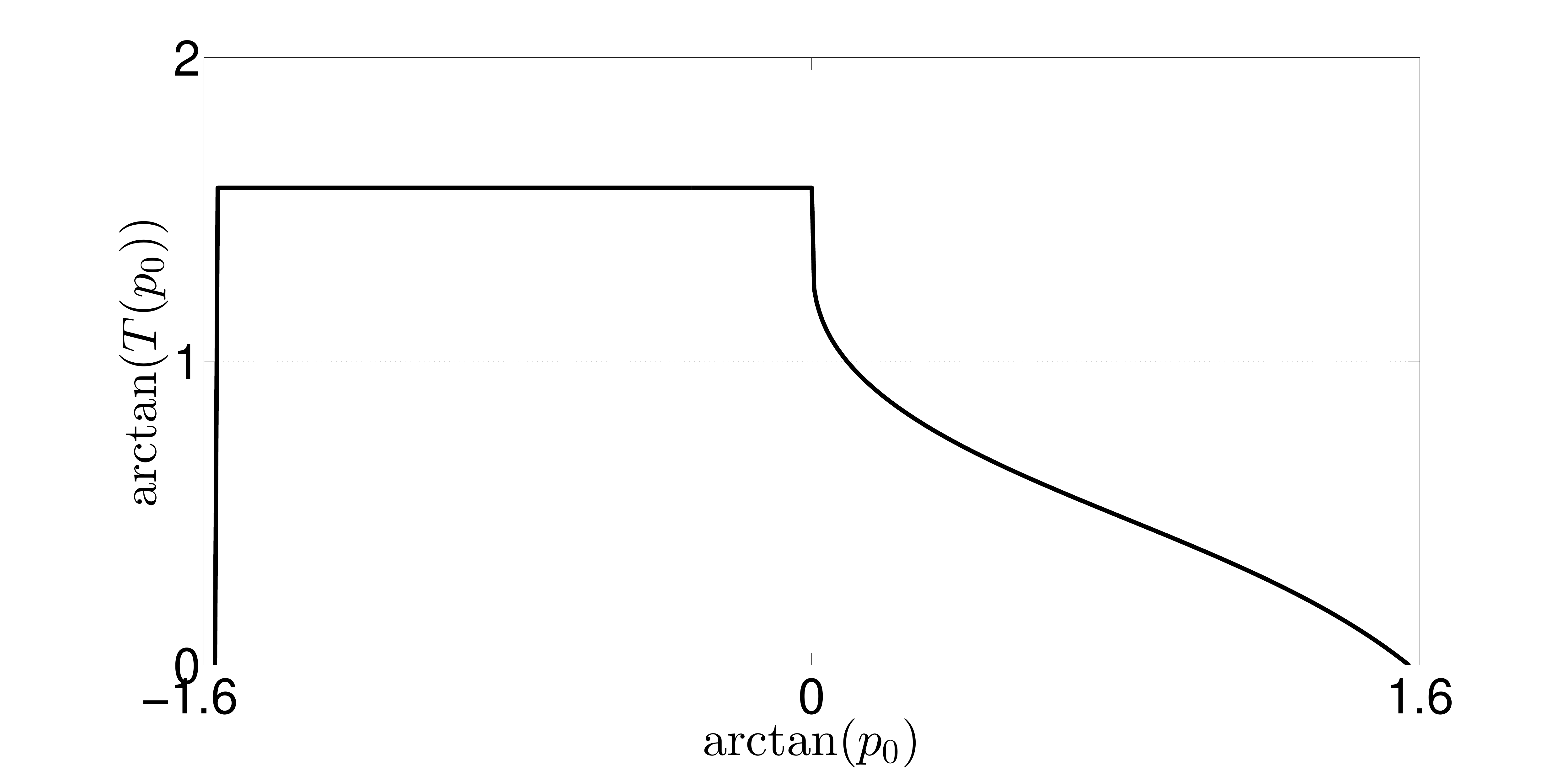}
\caption{Escape time of the RDE \eqref{eq:RDE:dWdt=2W+W^2}}
\label{fig:escapetimefunctiondim1}
\vspace{3mm}
\includegraphics[width=\myvecwidth]{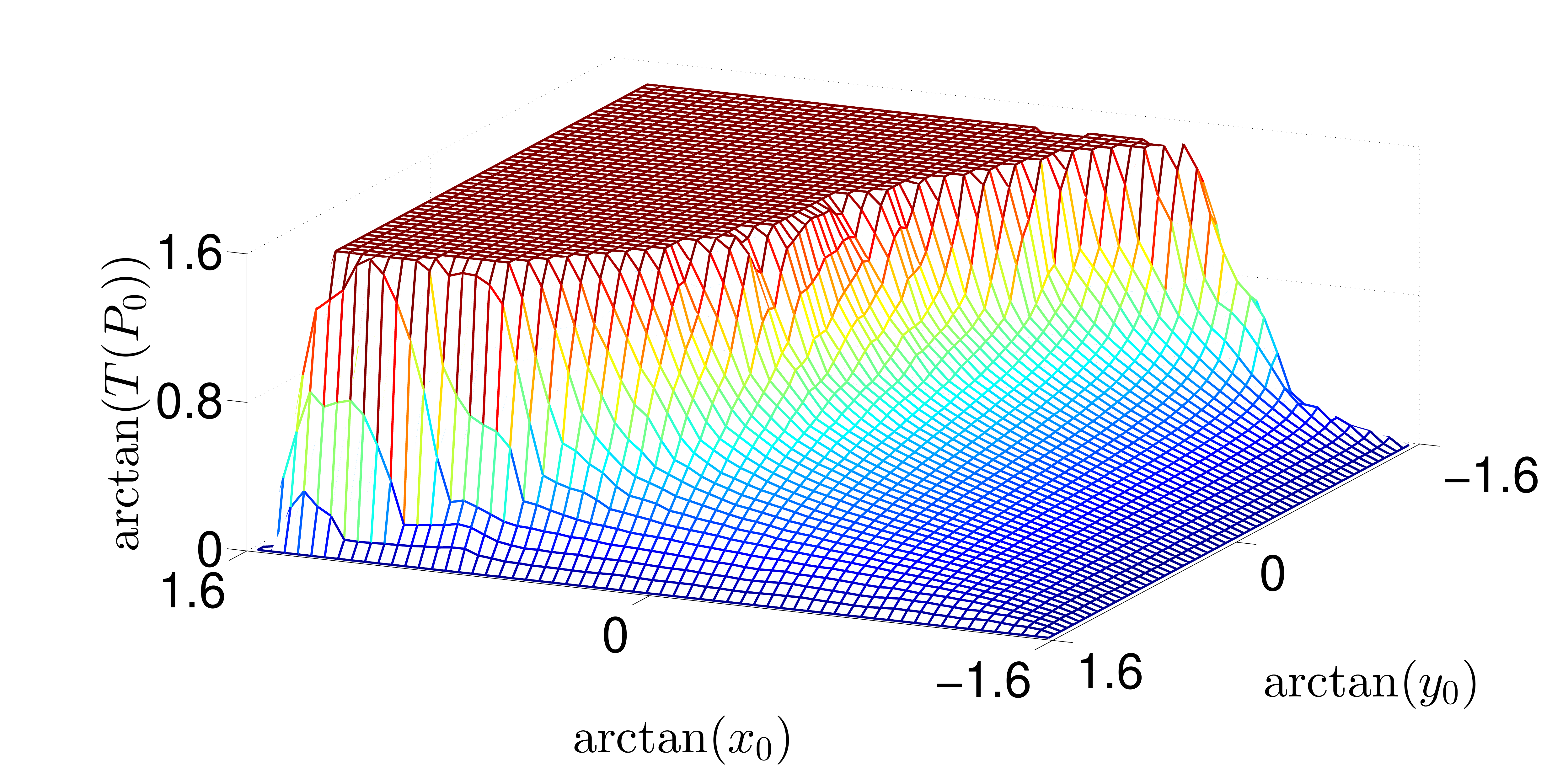}
\caption{Escape time of the RDE \eqref{eq:RDE:exmplevec}}
\label{fig:EscapeTime2dim}
\end{figure}

\begin{example}
We first apply Algorithm~\ref{alg:function} to the scalar RDE~\eqref{eq:RDE:dWdt=2W+W^2}. The algorithm gives the graph of the escape
time as a function of initial states as in 
Fig.~\ref{fig:escapetimefunctiondim1}. Notice that, the real axis
$\mathbb{R}$ (or the half real axis $[0, \infty)$) are rescaled to the
closed interval $[-\pi/2, \pi/2]$ ($[0, \pi/2]$, respectively) with the
arc-tangent function for the ease of presentation. The flatten part in
the left half, where the graph takes the value $\pi/2$, means that
the RDE never escapes in a finite time. 

We then consider again the vector-valued Riccati differential
equation~\eqref{eq:RDE:exmplevec}. The associated matrix 
\begin{equation*}
A = \begin{bmatrix}
-1 & 2 & -1\\
0 & 2 & -1\\
-1 & 3 & -3
\end{bmatrix}
\end{equation*}
has the real and distinct eigenvalues $1.4605$, $-0.7609$, and
$-2.6996$. Using Algorithm~\ref{alg:function} we can draw the graph of
the escape time as a function of initial states as in
Fig.~\ref{fig:EscapeTime2dim}. Notice that the values are rescaled by the arc-tangent function as was did in 
Fig.~\ref{fig:escapetimefunctiondim1}.  \end{example}

\subsection{A robustness result}\label{sec:net}

Although Theorems~\ref{thm:analytic} and~\ref{thm:ConvSequence} could allow us to approximately and numerically compute the mean escape time of the switched RDE~\eqref{eq:SwitchedRDE}, it is not clear if the computation is robust with respect to the computational error in Theorem~\ref{thm:ConvSequence} for the escape time of the deterministic RDEs. To provide an affirmative answer to this question, in this subsection we present a theorem showing that taking sufficiently many escape time using Algorithm~\ref{alg:function} can lead to provide an accurate estimate of the mean escape time of the switched RDE~\eqref{eq:SwitchedRDE}. 

Throughout this section, we will identify all the matrices $Y$ in
$\mathbb{R}^{(d-k)\times k}$ with its image $\psi(Y) \in
G_0^{k}(\mathbb{R}^{d})$ by the canonical chart. Since a Grassmann manifold
is compact, the manifold is totally bounded. Therefore, the subset
$G_0^{k}(\mathbb{R}^{d})$ is also totally bounded and hence admits a
finite $\epsilon$-net for every $\epsilon > 0$. This fact will allow us 
to discretize the state space with finitely many points. 

In this subsection, in addition to Assumption~\ref{asm:boudnedneddsss}, we also place the following technical assumption. 

\begin{assumption}\label{assum:UnifCont}
The mean escape times $T_A$ and $T_B$ are uniformly continuous as
functions from $G_0^k(\mathbb{R}^{d})$ to $\mathbb{R}$. 
\end{assumption}

Under this assumption, the following theorem states that an accurate estimate of the mean escape time of the switched RDE~\eqref{eq:SwitchedRDE} is possible by finitely many values of the escape time of deterministic RDEs~\eqref{eq:RDE} and~\eqref{eq:BRDE}. 

\begin{theorem}\label{thm:main:numerical}
Let $\epsilon>0$ be arbitrary. Then, there exists a finite set $S =
\{s_1, \dotsc, s_L\} \subset G^k_0(\mathbb{R}^{d})$ and an invertible matrix $\Psi$ such that 
\begin{equation*}
\Norm{
\begin{bmatrix}
T_A(S)\\T_B(S)
\end{bmatrix}
-
\Psi^{-1} \begin{bmatrix}
g_A(S)\\g_B(S)
\end{bmatrix}
}_\infty
<
\epsilon. 
\end{equation*}
\end{theorem}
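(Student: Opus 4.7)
The plan is to discretize, at a sufficiently fine finite $\delta$-net $S = \{s_1, \ldots, s_L\} \subset G_0^k(\mathbb{R}^d)$, the fixed-point equation $(I - M)(T_A, T_B) = (g_1, g_2)$ that was established in the proof of Theorem~\ref{thm:analytic}, so that the integral operator $M$ is replaced by a $2L \times 2L$ matrix $\tilde M$ and the claimed $\Psi$ is taken to be $I - \tilde M$. Invertibility of $\Psi$ will follow from an inherited small-gain bound $\norm{\tilde M}_\infty < 1$ via Lemma~\ref{lemma:SmallGain}, while closeness of $\Psi^{-1}(g_A(S), g_B(S))$ to $(T_A(S), T_B(S))$ will follow from driving the discretization error to zero using Assumption~\ref{assum:UnifCont}.

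First I would pick the net. Since the Grassmannian $G^k(\mathbb{R}^d)$ is a compact manifold it is totally bounded, and so is its open subset $G_0^k(\mathbb{R}^d)$; hence for every $\delta > 0$ one obtains a finite $\delta$-net $S$ as above. Next, for each $s_j \in S$, I would partition $[0, t_A(s_j)]$ into $m$ equal subintervals, approximate
\begin{equation*}
(M_1 T)(s_j) = \int_0^{t_A(s_j)} f(\tau)\, T(e^{A\tau}.s_j)\,d\tau
\end{equation*}
by its midpoint Riemann sum, replace each flow value $e^{A\tau_k}.s_j$ by its nearest net point $s_{\ell(j,k)} \in S$, and perform the analogous construction for $M_2$. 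The resulting quadrature coefficients together with the nearest-neighbor assignment define a nonnegative $2L \times 2L$ matrix $\tilde M$. A row-sum estimate identifies each row sum with the midpoint-rule approximation of some $F(t_i(s_j)) \leq F(t_0)$, so that $\norm{\tilde M}_\infty \leq F(t_0) + \varepsilon_m$ with $\varepsilon_m \to 0$ as $m \to \infty$. Hence for $m$ large the matrix $\Psi = I - \tilde M$ is invertible by Lemma~\ref{lemma:SmallGain}, with $\norm{\Psi^{-1}}_\infty \leq 1/(1 - \norm{\tilde M}_\infty)$.

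To close the argument, I would substitute the exact values $T_A(s_j), T_B(s_j)$ into the discretized system and obtain $\Psi(T_A(S), T_B(S)) = (g_A(S), g_B(S)) + \eta$, where $\eta$ splits into two pieces: the midpoint-rule quadrature error, which vanishes as $m \to \infty$ by the continuity of $f$ and of the Grassmannian flow $(\tau, Y) \mapsto e^{A\tau}.Y$ on $[0, t_0] \times S$, and the nearest-neighbor substitution error, which is bounded by $\omega_T(\delta') \cdot F(t_0)$, where $\omega_T$ is a joint modulus of continuity of $T_A$ and $T_B$ (furnished by Assumption~\ref{assum:UnifCont}) and $\delta'$ is a uniform bound on the distance between $e^{A\tau_k}.s_j$ and its nearest net point, which can itself be made $O(\delta)$ since the flow is Lipschitz on $[0, t_0]$. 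Choosing first $\delta$ and then $1/m$ small enough forces $\norm{\eta}_\infty < \epsilon(1 - \norm{\tilde M}_\infty)$; multiplying by $\Psi^{-1}$ then yields the required estimate.

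The main obstacle is controlling the substitution error near the escape-time boundary: as $\tau \to t_A(s_j)^-$, the flow point $e^{A\tau}.s_j$ approaches $\partial G_0^k(\mathbb{R}^d)$, where no point of the net $S \subset G_0^k(\mathbb{R}^d)$ need lie. Assumption~\ref{assum:UnifCont} is introduced precisely to defuse this difficulty, since the uniform continuity of $T_A$ and $T_B$ lets one bound the nearest-neighbor error by a single modulus of continuity uniformly across the entire integration window; combining this bound with the quadrature estimate so that the total error $\eta$ genuinely tends to zero uniformly over all $s_j \in S$ is the most delicate piece of bookkeeping in the argument.
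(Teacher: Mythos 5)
Your proposal is correct and takes essentially the same route as the paper's own proof: a finite $\delta$-net of $G^k_0(\mathbb{R}^d)$ from total boundedness of the Grassmannian, a time discretization of the integral operators combined with nearest-net-point substitution controlled by Assumption~\ref{assum:UnifCont}, the matrix $\Psi = I - \tilde{M}$ inverted via the small-gain Lemma~\ref{lemma:SmallGain}, and the identity expressing the residual as $\Psi^{-1}$ applied to the operator-approximation error. The only cosmetic difference is that the paper uses the exact weights $\xi_{A,n}(P)=\int_{nh}^{(n+1)h} f(\tau)\,\chi_{[0,t_A(P))}(\tau)\,d\tau$ rather than a midpoint rule, so the row sums of the approximating matrices are bounded by $F(t_0)$ exactly and your extra $\varepsilon_m$ slack (and the nearest-net distance bound, which is automatically at most $\delta$ without any Lipschitz property of the flow) is not needed.
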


To prove Theorem~\ref{thm:main:numerical}, we need the following lemma, which enables us to approximate the operators $M_A$ and $M_B$
by matrices. 

\begin{lemma}\label{lem:epsilon}
Let $\epsilon> 0$ be arbitrary. Then, there exists a finite set $S = \{s_1, \dotsc,
s_L\} \subset G^k_0(\mathbb{R}^{d})$ and matrices $N_A, N_B$ such that
\begin{equation*}
\begin{aligned}
\Norm{
(M_AT_B)(S) - N_A(T_B(S))}_\infty
&< \epsilon, \\
\Norm{
(M_BT_A)(S) - N_B(T_A(S))}_\infty
&< \epsilon
\end{aligned}
\end{equation*}
where, for a function $f \colon G^k_0(\mathbb{R}^{d}) \to \mathbb{R}$, 
we write 
\begin{equation*}
f(S) = \begin{bmatrix}
f(s_1)\\\vdots\\f(s_L)
\end{bmatrix} \in \mathbb{R}^L. 
\end{equation*}
Moreover, the matrices satisfy $\norm{N_A}_\infty, \norm{N_B}_\infty \leq F(t_0)$. 
\end{lemma}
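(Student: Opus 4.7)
The plan is to approximate each integral operator $M_A$, $M_B$ by a nearest-neighbour quadrature built on a finite net of points in $G_0^k(\mathbb{R}^d)$. I describe the construction for $M_A$ acting on $T_B$; the case of $M_B$ acting on $T_A$ is symmetric. Because the Grassmannian $G^k(\mathbb{R}^d)$ is compact, the open dense subset $G_0^k(\mathbb{R}^d)$ is totally bounded in the inherited metric $\rho$. Combining this with Assumption~\ref{assum:UnifCont}, I would first pick $\delta > 0$ small enough that $\rho(x,y) < \delta$ implies both $|T_A(x) - T_A(y)|$ and $|T_B(x) - T_B(y)|$ are less than $\epsilon / F(t_0)$, and then choose a finite $\delta$-net $S = \{s_1, \ldots, s_L\}$ of $G_0^k(\mathbb{R}^d)$.

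For each $i$, I would subdivide $[0, t_A(s_i)]$ into finitely many intervals $[\tau_{i,m}, \tau_{i,m+1}]$ fine enough that $\tau \mapsto e^{A\tau}.s_i$ varies by less than $\delta$ on each piece; this is possible because the flow is continuous on the compact interval $[0, t_A(s_i)] \subseteq [0, t_0]$ when viewed as a curve in the compact space $G^k(\mathbb{R}^d)$. For each subinterval I would select a net index $j(i,m)$ with $\rho(e^{A\tau_{i,m}}.s_i,\, s_{j(i,m)}) < \delta$ and set
\begin{equation*}
(N_A)_{ij} = \sum_{m \,:\, j(i,m) = j} \int_{\tau_{i,m}}^{\tau_{i,m+1}} f(\tau) \, d\tau,
\end{equation*}
with $N_B$ defined analogously using the $B$-flow and the escape times $t_B(s_i)$.

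Two applications of the triangle inequality, together with the choice of $\delta$, then give on each subinterval the bound $|T_B(e^{A\tau}.s_i) - T_B(s_{j(i,m)})| < \epsilon / F(t_0)$, so summing over $m$ yields
\begin{equation*}
\bigl| (M_A T_B)(s_i) - (N_A T_B(S))_i \bigr| \leq \frac{\epsilon}{F(t_0)} \int_0^{t_A(s_i)} f(\tau) \, d\tau \leq \epsilon,
\end{equation*}
uniformly in $i$, and symmetrically for the $B$-component. Because the entries of $N_A$ are nonnegative, the induced $\infty$-norm equals the maximum absolute row sum, which is
\begin{equation*}
\max_i \sum_{j=1}^L (N_A)_{ij} = \max_i F(t_A(s_i)) \leq F(t_0)
\end{equation*}
by Assumption~\ref{asm:boudnedneddsss}, giving the required norm bound at once.

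The main obstacle is the boundary behaviour at $\tau = t_A(s_i)$: the trajectory $e^{A\tau}.s_i$ leaves the chart $G_0^k(\mathbb{R}^d)$ in the limit, so $T_B$ is not directly defined there and the net $S$ cannot cover a neighbourhood of the exit point. I would resolve this by first extending $T_A$ and $T_B$ continuously from $G_0^k(\mathbb{R}^d)$ to the whole compact manifold $G^k(\mathbb{R}^d)$, which is permitted because $G_0^k$ is open and dense in $G^k$ and $T_A, T_B$ are uniformly continuous on it. With this extension in place, the flow and the integrand become continuous on the closed interval $[0, t_A(s_i)]$, the subdivision above can be performed uniformly in $i$, and the estimates go through without delicate truncation near the escape time.
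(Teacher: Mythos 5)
Your proposal is correct and follows essentially the same route as the paper's own proof: a finite $\delta$-net of $G_0^k(\mathbb{R}^d)$ obtained from total boundedness, a finite time quadrature of the integral defining $M_A$, uniform continuity (Assumption~\ref{assum:UnifCont}) to control the spatial replacement error, and the row sums $\sum_j (N_A)_{ij} = \int_0^{t_A(s_i)} f(\tau)\,d\tau \leq F(t_0)$ for the norm bound. Two cosmetic remarks only: the composite distance $\rho(e^{A\tau}.s_i,\, s_{j(i,m)})$ is bounded by $2\delta$, not $\delta$, so the continuity modulus should be chosen for $2\delta$; and your continuous extension of $T_A, T_B$ to all of $G^k(\mathbb{R}^d)$ is legitimate but unnecessary, since (as in the paper, which evaluates at left endpoints $nh < t_A(P)$ and cuts off with the indicator $\chi_{[0,t_A(P))}$) all evaluation points already lie in $G_0^k(\mathbb{R}^d)$.
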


\begin{pf}
Let us first recall that the Grassmannian~$G^k(\mathbb{R}^{d})$ can be equipped with the following distance; If $V_1, V_2
\in G^n(\mathbb{R}^{d})$ and $\pi_1, \pi_2$ are orthogonal projections
from $\mathbb{R}^{d}$ onto $V_1, V_2$, respectively, then one can
define a metric on $G^n(\mathbb{R}^{d})$ as $\rho(V_1, V_2) =
\norm{\pi_1-\pi_2}$. 

Now, by symmetry, it is sufficient to prove only the first inequality in the lemma. Let
$\epsilon > 0$ be arbitrary. By Assumption~\ref{assum:UnifCont} there
exists $\delta>0$ such that if $\rho(P, P') < \delta$, then $\abs{T_A(P)
- T_A(P')} < \epsilon/2$. Then, we take a finite $\delta$-net $X = \{s_1,
\dotsc, s_L\}$ of~$G^k_0(\mathbb{R}^{d})$. Let $q$ be a mapping on
$G^k_0(\mathbb{R}^{d})$ that assigns to any $P$ one of~$s_\ell$ within
the distance of~$\delta$. 

Then $h>0$ and define the mapping $M_A'T_B\colon
G^k_0(\mathbb{R}^{d}) \to \mathbb{R}$ by
\begin{equation*}
\begin{aligned}
(M'_A T_B)(P) &= \sum_{n=0}^\infty T_B(P_A(nh; P))\xi_{A, n}(P), \\
\xi_{A, n}(P) &= \int_{nh}^{(n+1)h}  f(\tau) \chi_{[0, t_A(P))}(\tau)\,d\tau. 
\end{aligned}
\end{equation*}
Since $L$ is finite we can take a sufficiently small $h>0$ such that,
for every $\ell=1, \dotsc, L$,
\begin{equation}\label{eq:MATB-M'ATB}
\Abs{(M_AT_B)(s_\ell) - (M'_A T_B)(s_\ell)} < \epsilon/2. 
\end{equation}
Let us fix such $h$. Then let us define $M_A''T_B\colon
G^k_0(\mathbb{R}^{d}) \to \mathbb{R}$ by
$(M''_A T_B)(P) = \sum_{n=0}^\infty T_B(q(P_A(nh; P)))\xi_{A, n}(P)$. 
Since $\rho(P, q(P)) < \delta$ for every $P \in
G^k_0(\mathbb{R}^{d})$, by the uniform continuity of~$T_B$,  for all
$\ell=1, \dotsc, L$ it holds that 
$\abs{(M'_AT_B)(s_\ell) - (M''_A T_B)(s_\ell)} < \epsilon/2$. 
This inequality together with \eqref{eq:MATB-M'ATB} implies
\begin{equation*}
\norm{(M_AT_B)(S) - (M''_A T_B)(S)}_\infty < \epsilon. 
\end{equation*}
Now, since $(M''_A T_B)(W_k)$ is a linear combination of~$T_B(s_1)$,
$\dotsc$, $T_B(s_L)$  (notice that the image of~$q$ is always one of
$s_\ell$, $1\leq \ell \leq L$) there exists a matrix $N_A$ such that $(M''_A T_B)(S)
= N_A(T_B(S))$. This completes the proof of the first claim. 

To prove the second claim, we just need to notice that the sum of the
$\ell$th row of the matrix $N_A$ is equal to 
$\sum_{n=0}^\infty  \xi_{A, n}(s_\ell) = \int_0^{t_A(s_\ell)}f(\tau) \,d\tau$, 
which is less always than or equal to $F(t_0)$ by the assumption
\eqref{eq:EscTimeBounded}.
\end{pf}

We can now prove  Theorem~\ref{thm:main:numerical}. 

\begin{pf}[Theorem~\ref{thm:main:numerical}]
First let us apply Lemma~\ref{lem:epsilon} with the constant $F(t_0)
\epsilon$ to obtain a finite set $S = \{s_1, \dotsc, s_L\}$ and matrices
$N_A, N_B$. Define
 \begin{equation*}
\Psi = \begin{bmatrix}
I& -N_A\\-N_B& I
\end{bmatrix}. 
\end{equation*}
By Lemmas~\ref{lemma:SmallGain} and~\ref{lem:epsilon}, we can see that this
matrix $\Psi$ is invertible and satisfy $\norm{\Psi^{-1}} \leq
1/F(t_0)$. Since 
\begin{equation*}
\begin{multlined}
\begin{bmatrix}
T_A(S)\\T_B(S)
\end{bmatrix}
-
\Psi^{-1} \begin{bmatrix}
g_A(S)\\g_B(S)
\end{bmatrix}
=
\Psi^{-1} \begin{bmatrix}
(M_AT_B)(S)-N_A(T_B(S))\\
(M_BT_A)(S)-N_B(T_A(S))
\end{bmatrix}
\end{multlined}
\end{equation*}
we have
\begin{equation*}
\begin{aligned}
\Norm{
\begin{bmatrix}
T_A(S)\\T_B(S)
\end{bmatrix}
-
\Psi^{-1} \begin{bmatrix}
g_A(S)\\g_B(S)
\end{bmatrix}}
<
\norm{\Psi^{-1}} (F(t_0)\epsilon)
\leq
\epsilon, 
\end{aligned}
\end{equation*}
as desired. This completes the proof of the theorem. 
\end{pf}

\section{Conclusion}\label{sec:conc}

In this paper, we have investigated the computational aspects of the mean escape time of the switched RDEs subject to Poisson switching. We have first shown that, if the escape times of their subsystems are available, then we can find the mean escape time of the switched RDE as a convergent power series. We have then presented an approximation framework for numerically computing the escape time of RDEs as the limit of a convergent sequence, which can enhance the applicability of the characterization as a power series. Numerical simulations are presented to illustrate the obtained results. 

\section*{Acknowledgment}

This work was supported by JSPS KAKENHI Grant Number JP21H01352.


\begin{thebibliography}{10}
\expandafter\ifx\csname url\endcsname\relax
  \def\url#1{\texttt{#1}}\fi
\expandafter\ifx\csname urlprefix\endcsname\relax\def\urlprefix{URL }\fi
\expandafter\ifx\csname href\endcsname\relax
  \def\href#1#2{#2} \def\path#1{#1}\fi
  
\bibitem{Shayman1986}
M.~A. Shayman, {Phase portrait of the matrix Riccati equation}, SIAM Journal on
  Control and Optimization 24~(1) (1986) 1--65.

\bibitem{Basar1989}
T.~Ba\c{s}ar, {Generalized Riccati equations in dynamics
  games}, in: The Riccati Equation, Springer-Verlag, New York, 1989, pp.
  293--333.

\bibitem{Doyle1989}
J.~C. Doyle, K.~Glover, P.~P. Khargonekar, B.~A. Francis, {State space
  solutions to standard $H_2$ and $H_\infty$
  control problems}, IEEE Transactions on Automatic Control 34 (1989) 831--847.

\bibitem{Chang1972}
K.~W. Chang, \href{http://epubs.siam.org/doi/pdf/10.1137/0503050}{{Singular
  perturbations of a general boundary value problem}}, SIAM Journal on
  Mathematical Analysis 3~(3) (1972) 520--527.

\bibitem{Freiling2002}
G.~Freiling, {A survey of nonsymmetric Riccati equations}, Linear Algebra and
  its Applications 351-352 (2002) 243--270.
  
\bibitem{Watson1995}
G.~N. Watson, {A Treatise on the Theory of Bessel Functions}, Cambridge
  University Press, 1995.

\bibitem{Martin1981}
C.~Martin, {Finite escape time for Riccati differential equations}, Systems
  \& Control Letters 1~(2) (1981) 127–131.

\bibitem{Sasagawa1982}
T.~Sasagawa, {On the finite escape phenomena for matrix Riccati equations},
  IEEE Transactions on Automatic Control 27~(4) (1982) 977--979.

\bibitem{Crouch1987}
P.~E. Crouch, M.~Pavon, {On the existence of solutions of the Riccati
  differential equation}, Systems \& Control Letters 9 (1987)
  203--206.
  
\bibitem{Doolin1990}
B.~F. Doolin, C.~F. Martin, {Introduction To Differential Geometry For
  Engineers}, Marcel Dekker Inc, 1990.


\bibitem{Getz1977}
W.~M. Getz, D.~H. Jacobson, {Sufficiency conditions for finite escape times in
  systems of quadratic differential equations}, J. Inst. Maths Applics 19
  (1977) 377--383.

\bibitem{Freiling2000}
G.~Freiling, G.~Jank, A.~Sarychev, {Non-blow-up conditions for Riccati-type
  matrix differential and difference equations}, Results in Mathematics 37
  (2000) 84--103.

\bibitem{Zhang2008}
L.~Zhang, E.-K. Boukas, J.~Lam,
  \href{http://ieeexplore.ieee.org/lpdocs/epic03/wrapper.htm?arnumber=4668537}{{Analysis
  and synthesis of Markov jump linear systems with time-varying delays and
  partially known transition probabilities}}, IEEE Transactions on Automatic
  Control 53~(10) (2008) 2458--2464.


\bibitem{Feng1992}
X.~Feng, K.~Loparo, Y.~Ji, H.~Chizeck, {Stochastic stability properties of jump
  linear systems}, IEEE Transactions on Automatic Control 37 (1992) 38--53.

\bibitem{Zhang2010}
L.~Zhang, J.~Lam, {Necessary and sufficient conditions for analysis and
  synthesis of Markov jump linear systems with incomplete transition
  descriptions}, IEEE Transactions on Automatic Control 55~(7) (2010)
  1695--1701.

\bibitem{Shi2015}
P.~Shi, F.~Li, {A survey on Markovian jump systems: Modeling and design},
  International Journal of Control, Automation and Systems 13~(1) (2015) 1--16.

\bibitem{Wu2017a}
Z.~G. Wu, S.~Dong, P.~Shi, H.~Su, T.~Huang, R.~Lu, {Fuzzy-model-based
  nonfragile guaranteed cost control of nonlinear Markov jump systems}, IEEE
  Transactions on Systems, Man, and Cybernetics: Systems 47~(8) (2017)
  2388--2397.

\bibitem{Jin2022}
L.~Jin, Y.~Yin, R.~Loxton, Q.~Lin, F.~Liu, K.~L. Teo, {Optimal control of
  nonlinear Markov jump systems by control parametrisation technique}, IET
  Control Theory and Applications (2022).

\bibitem{Zhang2019b}
M.~Zhang, P.~Shi, L.~Ma, J.~Cai, H.~Su, {Quantized feedback control of fuzzy
  Markov jump systems}, IEEE Transactions on Cybernetics 49~(9) (2019)
  3375--3384.

\bibitem{Shen2016b}
M.~Shen, J.~H. Park, D.~Ye, {A Separated Approach to Control of Markov Jump
  Nonlinear Systems with General Transition Probabilities}, IEEE Transactions
  on Cybernetics 46~(9) (2016) 2010--2018.

\bibitem{Hanlon2011a}
B.~Hanlon, V.~Tyuryaev, C.~Martin, {Stability of switched linear systems with
  Poisson switching}, Communications in Information and Systems 11~(4) (2011)
  307--326.

\bibitem{Jodar1995}
L.~Jodar, E.~Ponsoda, {Non-autonomous Riccati-type matrix differential
  equations: existence interval, construction of continuous numerical solutions
  and error bounds}, IMA journal of numerical analysis 15~(1) (1995) 61--74.

\bibitem{CorlessGonnetHareJeffreyKnuth1996}
R.~M. Corless, G.~H. Gonnet, D.~E.~G. Hare, D.~J. Jeffrey, D.~E. Knuth, {On the
  Lambert W function}, Advances in Computational Mathematics 5 (1996) 329--359.


\end{thebibliography}
\end{document}